\numberwithin{equation}{section}
\numberwithin{figure}{section}
\theoremstyle{plain}
\newtheorem{thm}{\protect\theoremname}[section]
\theoremstyle{definition}
\theoremstyle{plain}
\newtheorem{rem}[thm]{\protect\remarkname}
\theoremstyle{plain}
\newtheorem{prop}[thm]{\protect\propositionname}
\theoremstyle{plain}
\newtheorem{lem}[thm]{\protect\lemmaname}
\providecommand{\definitionname}{Definition}
\providecommand{\lemmaname}{Lemma}
\providecommand{\propositionname}{Proposition}
\providecommand{\remarkname}{Remark}
\providecommand{\theoremname}{Theorem}
\begin{document}
\title[Optimality of increasing stability for an IBVP]{Optimality of increasing stability for an inverse boundary value problem}
\author[Kow]{Pu-Zhao Kow}
\address{Department of Mathematics, National Taiwan University, Taipei 106,
Taiwan. }
\email{d07221005@ntu.edu.tw}
\author[Uhlmann]{Gunther Uhlmann}
\address{Department of Mathematics, University of Washington, Box 354350, Seattle, WA 98195-4350, USA, and Institute for Advanced Study of
the Hong Kong University of Science and Technology, Hong Kong.}
\email{gunther@math.washington.edu}
\author[Wang]{Jenn-Nan Wang}
\address{Institute of Applied Mathematical Sciences, National Taiwan
University, Taipei 106, Taiwan. }
\email{jnwang@math.ntu.edu.tw}
\begin{abstract}
In this work we study the optimality of increasing stability of the inverse boundary value problem (IBVP) for the Schr\"{o}dinger equation. The rigorous justification of  increasing stability for the IBVP for the Schr\"{o}dinger equation were established by Isakov \cite{Isa11} and by Isakov, Nagayasu, Uhlmann, Wang of the paper \cite{INUW14}. In \cite{Isa11}, \cite{INUW14}, the authors showed that the stability of this IBVP increases as the frequency increases in the sense that the stability estimate changes from a logarithmic type to a H\"{o}lder type. In this work, we prove that the instability changes from an exponential type to a H\"older type when the frequency increases. This result verifies that results in \cite{Isa11}, \cite{INUW14} are optimal. 
\end{abstract}

\subjclass[2020]{35J15; 35R25; 35R30}
\keywords{increasing stability phenomena; instability; inverse boundary value
problem; Schr\"{o}dinger equation}
\maketitle

\section{Introduction}

In this paper, we study the instability phenomenon of the inverse boundary value
problem (IBVP) for the Schr\"{o}dinger equation with a frequency. This IBVP is notoriously ill-posed. Ignoring the effect of the frequency, a logarithmic type estimate were first derived in \cite{Ale88} and this logarithmic estimate was shown be optimal in the form of exponential instability in \cite{Man01}. Such exponential instability was also established for different inverse problems in \cite{DR03, RS18, ZZ19}. However,
by taking account of the frequency, it was observed numerically \cite{CHP03} or \cite{KW20} that the stability of some inverse problems will improve as the frequency increases. The increasing stability phenomena were rigorously proved in other situations \cite{DI07,DI10,HI04,ILX20,INUW14,Isa07,Isa11,KU19,LLU19,NUW13}, not only for inverse problems, but also for the unique continuation property. 

In this work, we will study the counterpart of the increasing stability by investigating how the exponential instability is affected by the frequency. Here we consider the Schr\"{o}dinger equation with a potential and a frequency
\begin{equation}
\bigg[\Delta+q(x)+\kappa^{2}\bigg]u=0\quad\text{in}\;\;\Omega\subset\mathbb{R}^{d}\label{eq:sch-general}
\end{equation}
with $d\ge 2$. The Cauchy data corresponding to the Schr\"{o}dinger equation \eqref{eq:sch-general}
is defined by 
\[
\mathcal{C}_{q}:=\begin{Bmatrix}\begin{array}{l|l}
{\displaystyle \bigg(u|_{\partial\Omega},\frac{\partial}{\partial\nu}u\bigg|_{\partial\Omega}\bigg)} & u\text{ is a solution to \eqref{eq:sch-general}}\end{array}\end{Bmatrix}.
\]
Let ${\rm dist}\,(\mathcal{C}_{q_{1}},\mathcal{C}_{q_{2}})$
be the Hausdorff distance between two Cauchy data with respect to $q_1, q_2$. Under some appropriate assumptions, it was shown in \cite{INUW14} that 
\begin{equation}
\|\tilde{q}\|_{H^{-s}(\mathbb{R}^{d})}\le C\bigg(\kappa+\log\frac{1}{{\rm dist}\,(\mathcal{C}_{q_{1}},\mathcal{C}_{q_{2}})}\bigg)^{-2s-d}+C\kappa^{4}{\rm dist}\,(\mathcal{C}_{q_{1}},\mathcal{C}_{q_{2}}),\label{eq:sch-stable}
\end{equation}
where $\tilde{q}$ is the zero extension of $q_{1}-q_{2}$. The estimate \eqref{eq:sch-stable} clearly indicates that the logarithmic part decreases as $\kappa$ increases and the estimate changes from a logarithmic type to a H\"older type. Isakov in \cite{Isa11}, as well as Isaev and Novikov in \cite{IN12}, proved a similar estimate in terms of the Dirichlet-to-Neumann map. 

We now briefly describe the problem considered in this work. For simplicity, we consider \eqref{eq:sch-general} in $\Omega=B_{1}$ and $\partial\Omega=\mathcal{S}^{d-1}=\partial B_{1}$. If $\kappa^{2}\notin{\rm spec}\,(-(\Delta+q))$, where ${\rm spec}(-(\Delta+q))$ denotes the Dirichlet spectra of $-(\Delta+q)$ in $\Omega$, then the Dirichlet-to-Neumann map of the Schr\"{o}dinger equation \eqref{eq:sch-general},  $\Lambda_q: u|_{\mathcal{S}^{d-1}} \mapsto\partial_{r}u|_{\mathcal{S}^{d-1}}$ is well-defined.
It is well-known that 
\[
\Lambda_q\in\mathcal{L}(H^{\frac{1}{2}}(\mathcal{S}^{d-1}),H^{-\frac{1}{2}}(\mathcal{S}^{d-1})),
\]
where $\mathcal{L}(\mathcal{X},\mathcal{Y})$ denotes the space of
bounded linear operators from $\mathcal{X}$ to $\mathcal{Y}$. Clearly, given any $s\ge\frac{1}{2}$, we have 
\[
\Lambda_q\in\mathcal{L}(H^{s}(\mathcal{S}^{d-1}),H^{-s}(\mathcal{S}^{d-1})).
\]
To simplify the notations, we simply denote 
\[
\|\bullet\|_{\mathcal{L}(H^{s}(\mathcal{S}^{d-1}),H^{-s}(\mathcal{S}^{d-1}))}=\|\bullet\|_{s\rightarrow-s}.
\]

We now state the main results of this paper. 

\begin{thm}
	\label{thm:main}Let $\alpha>0$, $R>0$, and any 
	\begin{equation}
		\kappa^{2} > 0  \label{eq:wave-number}
	\end{equation}
	and 
	\begin{equation}
		\text{perturbation}\quad\theta\in(0,\min\{(2+\log2)^{-2\alpha},R\}),\label{eq:perturbation}
	\end{equation}
	where $R$ is related to the size of the potentials. Then there exist potentials $q_{1},q_{2}\in C^\alpha(B_{1})$ such that
	\begin{equation}
		\|q_{1}-q_{2}\|_{L^{\infty}(B_{1})} \ge\theta\label{eq:discrete-q1-q2}
	\end{equation}
	and
	\begin{equation}
		\|\Lambda_{q_{1}}-\Lambda_{q_{2}}\|_{\frac{d+4}{2}\rightarrow-\frac{d+4}{2}}\le C_{R}(1+\kappa^{2})\bigg[\exp\bigg(-\frac{1+\kappa^{2}}{3}\theta^{-\frac{1}{2\alpha}}\bigg)+3\theta^{\frac{1}{2\alpha}}\bigg],\label{eq:DN-map-approx}
	\end{equation}
	where $C_{R}$ is a positive constant depends only on the parameter
	$R$.
\end{thm}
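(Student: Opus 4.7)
My plan is to follow the classical entropy-plus-pigeonhole template introduced by Mandache \cite{Man01} and refined in \cite{DR03,RS18,ZZ19}, carrying the frequency $\kappa$ through every estimate so as to land on the precise two-term bound in \eqref{eq:DN-map-approx}. The three steps are: (a) build a large finite family of admissible potentials that are $\theta$-separated in $L^{\infty}$, (b) show that their images under $q\mapsto\Lambda_{q}$ live in a set with small covering number in the $\|\bullet\|_{\frac{d+4}{2}\rightarrow-\frac{d+4}{2}}$ norm, and (c) extract by pigeonhole two potentials whose DN maps are close.

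For (a), take a nonnegative $\varphi\in C_{c}^{\infty}(B_{1/2})$ with $\|\varphi\|_{L^{\infty}}=1$, fix a scale $h\in(0,1)$ to be chosen later, and form an $h$-separated family of translates $\varphi_{k}(x):=\varphi(h^{-1}(x-x_{k}))$ with pairwise disjoint supports inside $B_{1/2}$. Parametrize the family by bit-strings,
\begin{equation*}
q_{\zeta}:=\theta\sum_{k=1}^{N}\zeta_{k}\varphi_{k},\qquad\zeta\in\{0,1\}^{N}.
\end{equation*}
Then $\|q_{\zeta}-q_{\zeta'}\|_{L^{\infty}}\ge\theta$ for $\zeta\neq\zeta'$, while $\|q_{\zeta}\|_{C^{\alpha}}\lesssim\theta h^{-\alpha}$. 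The calibration $h\simeq\theta^{1/(2\alpha)}$ (feasible in view of the smallness assumption $\theta<(2+\log2)^{-2\alpha}$) keeps every $q_{\zeta}$ in the $C^{\alpha}$-ball of radius $R$, and it is precisely this choice of $h$ that produces the exponent $\theta^{-1/(2\alpha)}$ in the theorem.

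For (b), expand $\Lambda_{q}$ in the spherical harmonic basis $\{Y_{n}^{j}\}$ on $\mathcal{S}^{d-1}$. The main analytic input is a frequency-dependent matrix-element estimate of the shape
\begin{equation*}
\bigl|\langle(\Lambda_{q_{\zeta}}-\Lambda_{0})Y_{n}^{j},Y_{m}^{l}\rangle\bigr|\lesssim(1+\kappa^{2})\exp\bigl(-c(m+n)/(1+\kappa^{2})\bigr),
\end{equation*}
valid once $m+n$ is sufficiently large compared to $1+\kappa^{2}$. This follows from Alessandrini's identity combined with uniform Bessel/Hankel asymptotics for the radial parts of the mode solutions of $(\Delta+\kappa^{2}+q)u=0$, in the same spirit as the ingredients behind \eqref{eq:sch-stable} in \cite{INUW14,Isa11}. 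Consequently, truncating $\Lambda_{q}$ at harmonic level $K\simeq(1+\kappa^{2})\log(1/\omega)$ produces at most $\omega$ error in the $\frac{d+4}{2}\rightarrow-\frac{d+4}{2}$ operator norm (the Sobolev weights $n^{-(d+4)/2}$ help substantially), and the truncated operator lies in a finite-dimensional matrix ball whose $\omega$-net has cardinality bounded by $\exp(CK^{2(d-1)}\log(1/\omega))$.

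For (c), I balance: choose $\omega$ so that $2^{N}$ strictly exceeds that $\omega$-cardinality; pigeonhole then yields $\zeta\neq\zeta'$ for which $\|\Lambda_{q_{\zeta}}-\Lambda_{q_{\zeta'}}\|_{\frac{d+4}{2}\rightarrow-\frac{d+4}{2}}$ is at most the sum of the truncation error and twice the net scale. Optimizing against the choice $h\simeq\theta^{1/(2\alpha)}$ recovers \eqref{eq:DN-map-approx}: the exponential term $(1+\kappa^{2})\exp(-(1+\kappa^{2})\theta^{-1/(2\alpha)}/3)$ is the truncation/pigeonhole contribution, while the polynomial term $3(1+\kappa^{2})\theta^{1/(2\alpha)}$ is forced by the $C^{\alpha}$-regularity budget which limits the bumps to width $\gtrsim\theta^{1/(2\alpha)}$. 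The principal technical obstacle is establishing the matrix-element decay with the precise rate $(m+n)/(1+\kappa^{2})$: this demands uniform asymptotics of Bessel and Hankel functions in the delicate regime where the order and the argument are both large and close to $\kappa$ --- exactly the regime driving the transition from logarithmic to H\"older behaviour on the stability side, and hence also on the instability side proved here.
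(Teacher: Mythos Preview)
Your high-level template (entropy plus pigeonhole) matches the paper, but the analytic engine you propose is not the one that actually produces \eqref{eq:DN-map-approx}, and in several places it would fail.

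First, you compare every $\Lambda_{q_\zeta}$ to $\Lambda_0$. For arbitrary $\kappa^2>0$ this map need not exist, since $\kappa^2$ may be a Dirichlet eigenvalue of $-\Delta$. The paper handles this by working relative to the complex reference potential $q_{\mathrm{ref}}=i$ (so $q_\ell=\tilde q_\ell+i$), which makes $\Lambda_{q_\ell}$ well defined for every $\kappa$ and, crucially, yields the $\kappa$-independent $L^2$ resolvent bound $\|v\|_{L^2}\le\|f\|_{L^2}$ (Lemma~\ref{lem:elliptic-est}). Without this device you have no uniform control of the mode solutions.

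Second, and more seriously, your claimed matrix-element decay $\exp\bigl(-c(m+n)/(1+\kappa^2)\bigr)$ via Bessel/Hankel asymptotics is neither how the paper argues nor does it produce the stated bound. The paper never touches Bessel functions: it tests against the \emph{harmonic} extensions $\tilde Y_{nk}(x)=|x|^n Y_{nk}(x/|x|)$ and obtains, for $\ell=\max\{m,n\}$, a splitting $\langle\Gamma(q;i)Y_{mj},Y_{nk}\rangle=M^{(1)}_{mjnk}+M^{(2)}_{mjnk}$ with
\[
|M^{(1)}_{mjnk}|\lesssim \Phi(R,\kappa)\,(1+\ell)\,r_0^{\ell},\qquad
|M^{(2)}_{mjnk}|\lesssim \Phi(R,\kappa)\,(1+\kappa^2)\,(1+\ell).
\]
The first piece decays exponentially in $\ell$ at a rate \emph{independent of $\kappa$} (coming solely from $\mathrm{supp}\,q\subset B_{r_0}$), while the second piece has no exponential decay at all and is controlled only by the Sobolev weight $(1+\ell)^{-\tau}$ with $\tau=1$. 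It is precisely this two-scale structure that generates the two terms in \eqref{eq:DN-map-approx}: in the $\delta$-net construction one needs truncation levels $\ell_1\sim\log(\Phi/\delta)$ for $M^{(1)}$ and $\ell_2\sim(1+\kappa^2)\Phi/\delta$ for $M^{(2)}$, and the dichotomy ``$\log$ dominates'' vs.\ ``linear dominates'' in \eqref{eq:equation-theta-delta} gives respectively the exponential and the H\"older contribution. So the H\"older term $3\theta^{1/(2\alpha)}$ does \emph{not} come from the $C^\alpha$ regularity budget on your bumps, as you assert; it comes from the $\kappa^2$-sized piece $M^{(2)}$ that only decays polynomially. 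If you run the pigeonhole count with your single-rate decay $\exp(-c\ell/(1+\kappa^2))$ and truncation $K\simeq(1+\kappa^2)\log(1/\omega)$, the resulting net has $\log|Y|\sim(1+\kappa^2)^{2(d-1)}(\log(1/\omega))^{2d-1}$, which against $N\sim\theta^{-d/(2\alpha)}$ yields neither the exponent $\theta^{-1/(2\alpha)}$ nor the factor $(1+\kappa^2)$ in the exponential of \eqref{eq:DN-map-approx}.

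Finally, your calibration $h\simeq\theta^{1/(2\alpha)}$ pins down the $C^\alpha$ norm and hence $N$. The paper instead lets the $C^\alpha$ bound $\beta$ be as large as needed (depending on $\kappa$), using Proposition~\ref{prop:Kolmogorov} to make $|Z|$ exceed $|Y|$; the theorem only asserts $q_1,q_2\in C^\alpha$, with no uniform $C^\alpha$ bound. Your fixed $N\sim\theta^{-d/(2\alpha)}$ is not large enough to beat the net size \eqref{eq:net-size-q-tau1} once $\kappa$ is large.
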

\begin{rem}
There are two reasons in using $H^{\frac{d+4}{2}}$ norm in the Dirichlet-to-Neumann map. One is to bound the $\|\cdot\|_{\frac{d+4}{2}\rightarrow-\frac{d+4}{2}}$ norm of the Dirichlet-to-Neumann map by the $X_s$ of its corresponding discretization (see \eqref{eq:net}). The other is to bound the cardinality of the $\delta$-net $Y$ 
(see {\rm Remark~\ref{fk}}).
\end{rem}

\begin{rem}\label{rem:lambda-choice}
In the proof, we actually construct $q_{\ell}$ ($\ell=1,2$) with $q_{\ell} = \tilde{q}_{\ell} + i$ for some real-valued functions $\tilde{q}_{\ell}$, where $i = \sqrt{-1}$. Therefore, $\kappa^{2}\notin\cup_{\ell=1,2}{\rm spec}\,(-(\Delta+q_{\ell}))$ and so $\Lambda_{q_{\ell}}$ are well-defined. We want to point out that the discrepancy parameter $\theta$ is independent of $\kappa$ and the estimate on the right hand side of \eqref{eq:DN-map-approx} is expressed explicitly in $\kappa$. Furthermore, $C_R$ in \eqref{eq:DN-map-approx} only depends on the size of the potentials and is independent of $\kappa$. The assumption of $\Im (q_\ell)=1$ is to achieve this purpose with the help of explicit elliptic estimates.  On the other hand, in the usual stability estimate of the inverse boundary value problem for the Schr\"odinger equation, potentials $q_\ell$, $\ell=1,2$, are a priori given such that we can assume that $\kappa^2$ is not a Dirichlet eigenvalue of $-(\Delta+q_\ell)$. In which case, we can consider the Dirichlet-to-Neumann maps \cite{Isa11}. Instead of Dirichlet-to-Neumann maps, we can also use the Cauchy data in the stability estimate \cite{INUW14}. However, in the instability estimate, potentials $q_\ell$ are determined posteriorly. Therefore, either assuming that $\kappa^2$ is not a Dirichlet eigenvalue of $-(\Delta+q_\ell)$ or considering the Cauchy data are not feasible.
 
\end{rem}

By the standard elliptic estimates, one can show that for \emph{any} $q_1,q_2\in L^\infty(B_1)$
\begin{equation}\label{well-posed}
\|\Lambda_{q_1}-\Lambda_{q_2}\|_{\frac 12\to-\frac 12}\le C(q_1,q_2,\kappa)\|q_1-q_2\|_{L^\infty(B_1)}
\end{equation}
provided $\Lambda_{q_1}$ and $\Lambda_{q_2}$ are well-defined, where the constant $C(q_1,q_2,\kappa)$ depends on $q_1,q_2,$ and $\kappa$. Estimate \eqref{well-posed} can be viewed as an well-posedness estimate of the mapping $q\to\Lambda_q$. However, it provides no information of the ill-posedness of $\Lambda_q\to q$. 

Theorem~\ref{thm:main} is valid for all frequencies $\kappa$. The estimate \eqref{eq:DN-map-approx} apparently demonstrates that its right hand side is dominated by the H\"older component $\theta^{\frac{1}{2\alpha}}$ when $\kappa$ is large. 
To further elucidate our result, it is interesting to compare Theorem~\ref{thm:main} and a similar instability estimate proved by Isaev in \cite{Isa13}. In our work, we first give an \emph{arbitrary} wave number $\kappa^{2}$ and a perturbation $\theta$ (independent of $\kappa$), and then construct suitable potentials $q_{1},q_{2}$ satisfying \eqref{eq:discrete-q1-q2} and \eqref{eq:DN-map-approx}. In \cite{Isa13}, Isaev constructed \emph{some} wave number $\kappa^{2}$ and some potentials $q \in \mathcal{C}^{d+1}(B_{1})$ such that 
\begin{equation}
\|q\|_{L^{\infty}(B_{1})} > (1 + \kappa)\delta + (1 + \kappa)^{-(d+2)}(\ln(3+\delta^{-1}))^{-\frac{d+2}{2}}, \label{eq:instability}
\end{equation}
where $\delta = \|\Lambda_{q}-\Lambda_{0}\|_{L^{\infty}(\partial B_{1})\rightarrow L^{\infty}(\partial B_{1})}$. In other words, \eqref{eq:instability} validates the optimality of the increasing stability estimate obtained in \cite{IN12} (similar to \eqref{eq:sch-stable}) for potentials near zero. We want to point out that the potentials $q_1, q_2$ constructed in Theorem ~\ref{thm:main} are not necessarily small. Unlike the $\kappa$-independent discrepancy parameter $\theta$ in \eqref{eq:discrete-q1-q2}, the lower bound of $\|q\|_{L^{\infty}(B_{1})}=\|q-0\|_{L^{\infty}(B_{1})}$ depends on $\kappa$ in view of \eqref{eq:instability}.

However, Theorem~\ref{thm:main} is not optimal in the lower frequency. It is expected that the IVBP is exponentially unstable if $\kappa$ is small. To verify this, we prove a similar estimate in the lower frequency. 
\begin{thm}
	\label{thm:second-main}Let $\alpha>0$ and 
	\[
	0<\kappa^{2}\le\frac{1}{4}\kappa_{1}
	\]
	and 
	\[
	\text{perturbation}\quad\theta\in \bigg(0,\min \bigg\{ (\frac{\kappa_1}{2}+\log2)^{-2\alpha},\frac{1}{4}\kappa_{1} \bigg\}\bigg),
	\]
	where $\kappa_1$ is the first Dirichlet eigenvalue of $-\Delta$ on $B_1$. Then there exist (real-valued) potentials $q_{1},q_{2}\in C^\alpha(B_{1})$
	such that \eqref{eq:discrete-q1-q2} holds and
	\begin{equation}
		\|\Lambda_{q_{1}}-\Lambda_{q_{2}}\|_{\frac{d+4}{2}\rightarrow-\frac{d+4}{2}}\le 16\sqrt{2}\exp\bigg(-\frac{1}{3}\theta^{-\frac{1}{2\alpha}}\bigg)+24\sqrt{2}\kappa^{2}\theta^{\frac{1}{2\alpha}}.\label{eq:DN-est-improve}
	\end{equation}
\end{thm}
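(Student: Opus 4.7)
The proof of Theorem~\ref{thm:second-main} reuses the Mandache-type architecture of Theorem~\ref{thm:main} almost verbatim, with one key simplification enabled by the low-frequency hypothesis. Because $\kappa^{2}\le\kappa_{1}/4$ and the potentials we construct will satisfy $\|q\|_{L^{\infty}(B_{1})}\le\kappa_{1}/4$, the operator $-(\Delta+q+\kappa^{2})$ on $H^{1}_{0}(B_{1})$ is uniformly coercive via Poincar\'e's inequality, so $\kappa^{2}\notin\mathrm{spec}(-(\Delta+q))$ automatically and $\Lambda_{q}$ is well-defined. This allows us to work with real-valued potentials and avoid the imaginary shift $q_{\ell}=\tilde{q}_{\ell}+i$ used in Theorem~\ref{thm:main}; it also makes the elliptic constants in the forward solver $\kappa$-independent, which is ultimately what delivers the crisp numerical constants $16\sqrt{2}$, $24\sqrt{2}$ and $\tfrac{1}{3}$ in \eqref{eq:DN-est-improve} in place of the abstract $C_{R}(1+\kappa^{2})$ of \eqref{eq:DN-map-approx}.

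For the packing side, exactly as in the proof of the first theorem but restricted to real scalars, set $\eta=\theta^{1/(2\alpha)}$ and build $q_{\sigma}=\eta^{\alpha}\sum_{k}\sigma_{k}\varphi_{k}$ for $\sigma\in\{-1,+1\}^{M}$, where $\{\varphi_{k}\}_{k=1}^{M}$ are disjointly supported scaled translates of a fixed $C^{\infty}_{c}(B_{1})$ profile and $M\sim\theta^{-d/(2\alpha)}$. Standard scaling gives $\|q_{\sigma}\|_{C^{\alpha}(B_{1})}\le 1$, $\|q_{\sigma}\|_{L^{\infty}(B_{1})}\le\eta^{\alpha}=\theta\le\kappa_{1}/4$, and $\|q_{\sigma}-q_{\sigma'}\|_{L^{\infty}}\ge\theta$ whenever $\sigma\ne\sigma'$, yielding a $\theta$-separated family of size $2^{M}$ that fits inside the coercivity regime above. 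For the covering side, expand $\Lambda_{q_{\sigma}}-\Lambda_{0}$ in spherical harmonics on $\mathcal{S}^{d-1}$. Using the explicit Dirichlet Green function of $-\Delta-\kappa^{2}$ on $B_{1}$ together with a Neumann series in $q$ that converges because of the size bound on $q$, the contribution of spherical harmonics of degree $\ge K$ to $\Lambda_{q_{\sigma}}-\Lambda_{0}$ is bounded in the $\|\cdot\|_{\frac{d+4}{2}\to-\frac{d+4}{2}}$ norm by a constant times $e^{-cK}$. A $\delta$-grid discretization of the remaining finite-dimensional low-frequency coefficients produces a $\delta$-net $Y$ whose cardinality grows only sub-exponentially in $\log(1/\delta)$, exactly as in the proof of Theorem~\ref{thm:main}.

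The conclusion is by pigeonhole. Set $\delta=8\sqrt{2}\exp\!\bigl(-\tfrac{1}{3}\theta^{-1/(2\alpha)}\bigr)+12\sqrt{2}\,\kappa^{2}\theta^{1/(2\alpha)}$; the hypotheses $\theta<(\kappa_{1}/2+\log 2)^{-2\alpha}$ and $\theta<\kappa_{1}/4$ ensure $K$ may be chosen large enough that $|Y|<2^{M}$, so there exist $\sigma\ne\sigma'$ with $\Lambda_{q_{\sigma}}$ and $\Lambda_{q_{\sigma'}}$ lying in a common ball of $Y$, hence $\|\Lambda_{q_{\sigma}}-\Lambda_{q_{\sigma'}}\|_{\frac{d+4}{2}\to-\frac{d+4}{2}}\le 2\delta$, which is \eqref{eq:DN-est-improve}. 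The main technical obstacle is the covering step: one must isolate the $\kappa^{2}$ rather than $1+\kappa^{2}$ prefactor in the H\"older term and pin down the explicit constants $16\sqrt{2}$, $24\sqrt{2}$ and $\tfrac{1}{3}$. This is precisely where the low-frequency hypothesis pays off. The leading Neumann term of $\Lambda_{q}-\Lambda_{0}$ contributes the exponentially small tail with a $\kappa$-independent constant, while the first-order correction in $q$ admits an $O(\kappa^{2})\|q\|_{L^{\infty}}$ bound with a Poincar\'e-type constant that does not degenerate as $\kappa\to 0^{+}$; tracking these factors through the counting argument yields the claimed inequality.
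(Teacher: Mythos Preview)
Your overall architecture and your diagnosis of the low-frequency regime are both right: with $\kappa^{2}\le\kappa_{1}/4$ and $\|q\|_{L^{\infty}}\le\kappa_{1}/4$ one gets uniform coercivity of $-(\Delta+q+\kappa^{2})$ on $H^{1}_{0}(B_{1})$ via Poincar\'e, so one may take real potentials and $\kappa$-independent elliptic constants. The paper does exactly this, replacing the reference potential $q_{\rm ref}=i$ of Theorem~\ref{thm:main} by $q_{\rm ref}=0$.

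The gap is in your covering step, specifically in how the $\kappa^{2}\theta^{1/(2\alpha)}$ term arises. Your account attributes the exponential piece to ``the leading Neumann term'' and the H\"older piece to a ``first-order correction in $q$'' with an $O(\kappa^{2})\|q\|_{L^{\infty}}$ bound. But $\Lambda_{q}-\Lambda_{0}$ is genuinely of order $\|q\|_{L^{\infty}}$, not $\kappa^{2}\|q\|_{L^{\infty}}$; no term in a Neumann expansion in $q$ carries an extra $\kappa^{2}$ prefactor. In the paper the two pieces come from a different decomposition of the matrix entries $\langle(\Lambda_{q}-\Lambda_{0})Y_{mj},Y_{nk}\rangle$: one piece $M^{(1)}_{mjnk}(q)=-\int_{B_{r_{0}}}q\,u_{mj}\overline{\tilde Y_{nk}}$ decays like $r_{0}^{\ell}$ (because ${\rm supp}\,q\subset B_{r_{0}}$), and the other piece $M^{(2)}_{mjnk}(0)=-\kappa^{2}\int_{B_{1}}(u_{mj}-\mathring u_{mj})\overline{\tilde Y_{nk}}$ has no exponential decay in $\ell$ but carries the explicit factor $\kappa^{2}$ because $q_{\rm ref}=0$. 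This is what produces a net $Y$ with
\[
\log|Y|\le\eta\Bigl(1+\log(1+\tfrac{1}{\delta})+\tfrac{2\kappa^{2}}{\delta}\Bigr)^{2d},
\]
and the two summands inside the bracket are what give, after the implicit relation $\theta^{-1/(2\alpha)}=\log(1+\tfrac{1}{\tilde\delta})+\tfrac{2\kappa^{2}}{\tilde\delta}$ and a two-case analysis, the bound $\tilde\delta\le 2e^{-\frac{1}{3}\theta^{-1/(2\alpha)}}+3\kappa^{2}\theta^{1/(2\alpha)}$. Multiplying by $8\sqrt{2}$ from Proposition~\ref{prop:matrix-repn} yields the constants $16\sqrt{2}$ and $24\sqrt{2}$.

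Your proposal skips this mechanism entirely: you set $\delta$ equal to the target bound and assert that $|Y|<2^{M}$, but without an entropy estimate of the above shape you have no way to verify that inequality, and your Neumann-series heuristic does not supply one. Two smaller points: your bump-function family should be supported in some fixed $B_{r_{0}}$ with $r_{0}<1$ (otherwise no exponential tail), and your choice $\sigma_{k}\in\{\pm1\}$ gives sign-changing potentials whereas the paper works in $\mathcal{N}^{\theta}_{\alpha\beta}(B_{r_{0}})\subset\{f\ge0\}$; neither is fatal, but both need to be said.
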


\begin{rem}
	Remarked as above, we construct real-valued potentials $q_{1},q_{2}\in L^{\infty}(B_{1})$
	with 
	\[
	\|q_{l}\|_{L^{\infty}}+\kappa^2<\kappa_{1},\;\; l=1,2.
	\] 
	Therefore,
	$\kappa^{2}\notin\cup_{l=1,2}{\rm spec}\,(-(\Delta+q_{l}))$ and the Dirichlet-to-Neumann map $\Lambda_{q_l}$ is well-defined.
\end{rem}

Estimate \eqref{eq:DN-map-approx} shows that the instability changes from an exponential type to a H\"older type when $\kappa^2$ increases, conversely, \eqref{eq:DN-est-improve} shows that the instability is an exponential type when $\kappa^2$ is small as in \cite{Man01}. In particular, for any perturbation $\theta$, we have the following dichotomy:
\begin{equation*}
\left\{
\begin{aligned}%\label{eq:dichotomy}
	&\mbox{exponential instability when}\;\;\kappa^{2} < \theta_{*}\quad \bigg( \mbox{i.e. when}\;\;\kappa^{2} \;\;\mbox{is small} \bigg),\\
	&\mbox{H\"older instability when}\;\;\kappa^{2} > \theta^{*}\quad \bigg( \mbox{i.e. when}\;\;\kappa^{2} \;\;\mbox{is large}\bigg),
\end{aligned}\right.
\end{equation*}
where
\begin{align*}
	\theta_{*}& = \min \bigg\{ 3\theta^{\frac{1}{2\alpha}}\log\bigg(\frac{1}{3}\theta^{-\frac{1}{2\alpha}}\bigg) , \frac{2}{3} \theta^{\frac{1}{2\alpha}} \exp \bigg( -\frac{1}{3} \theta^{-\frac{1}{2\alpha}}\bigg) \bigg\}, \\
	\theta^{*}& = \max \bigg\{ 3\theta^{\frac{1}{2\alpha}}\log\bigg(\frac{1}{3}\theta^{-\frac{1}{2\alpha}}\bigg) , \frac{2}{3} \theta^{\frac{1}{2\alpha}} \exp \bigg( -\frac{1}{3} \theta^{-\frac{1}{2\alpha}}\bigg) \bigg\}. 
\end{align*}
Such transition of instability was also proved for an inverse problem in the stationary radiative transport equation in \cite{ZZ19}. Our study is also inspired by their result. We also want to comment that the stability estimate of determining a potential in the wave equation by the knowledge of the hyperbolic Dirichlet-to-Neumann map was shown to be a H\"older type, see \cite{Sun90}. In other words, in the high frequency, the inverse boundary value for \eqref{eq:sch-general} is as stable as the inverse problem for the wave equation. 

We can actually combine Theorem~\ref{thm:main} and Theorem~\ref{thm:second-main} into a single theorem. 
\begin{thm}
Let $\alpha>0$ and $R>0$ be any given constants. There exists a positive constant $\theta_{0} = \theta_{0}(\alpha,R)$ such that the following statement holds: For each $\kappa > 0$ and $0 < \theta < \theta_{0}$, there exist potentials $q_{1},q_{2} \in \mathcal{C}^{\alpha}(B_{1})$ such that \eqref{eq:discrete-q1-q2} holds and 
\begin{equation}
\|\Lambda_{q_{1}}-\Lambda_{q_{2}}\|_{\frac{d+4}{2}\rightarrow-\frac{d+4}{2}}\le C_{R} \max\{1,\kappa^{2}\} \exp \bigg( -c_{0} \max\{1,\kappa^{2}\}\theta^{-\frac{1}{2\alpha}} \bigg) + C_R\kappa^{2}\theta^{\frac{1}{2\alpha}}, \label{eq:instability-result-combine}
\end{equation}
for some constants $C_{R}$ depending only on $R$ and absolute constant $c_{0}$. 
\end{thm}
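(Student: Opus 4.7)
The statement is a bookkeeping synthesis of Theorem~\ref{thm:main} and Theorem~\ref{thm:second-main} via a dichotomy at the threshold $\kappa^{2}=1$, chosen so that $\max\{1,\kappa^{2}\}$ switches behavior exactly there. First, I would set
\[
\theta_{0} = \theta_{0}(\alpha,R) := \min\Big\{(2+\log 2)^{-2\alpha},\, \big(\tfrac{\kappa_{1}}{2}+\log 2\big)^{-2\alpha},\, R,\, \tfrac{\kappa_{1}}{4}\Big\},
\]
which is positive and depends only on $\alpha$ and $R$ (the dimension $d$ being fixed once and for all, so $\kappa_{1}$ is a fixed constant). With this choice, every $0<\theta<\theta_{0}$ simultaneously satisfies the admissibility requirement \eqref{eq:perturbation} of Theorem~\ref{thm:main} and the corresponding hypothesis of Theorem~\ref{thm:second-main}. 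The elementary bound $\kappa_{1}=j_{d/2-1,1}^{2}\ge j_{0,1}^{2}>5$ for $d\ge 2$ gives $\kappa_{1}/4>1$, which guarantees that the entire low-frequency window $0<\kappa^{2}<1$ lies inside the range of validity of Theorem~\ref{thm:second-main}.

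Second, I would split into two regimes. In the high-frequency regime $\kappa^{2}\ge 1$, I would invoke Theorem~\ref{thm:main}; using $\max\{1,\kappa^{2}\}=\kappa^{2}$ and $1+\kappa^{2}\le 2\kappa^{2}$, the estimate \eqref{eq:DN-map-approx} reduces to
\[
\|\Lambda_{q_{1}}-\Lambda_{q_{2}}\|_{\frac{d+4}{2}\to-\frac{d+4}{2}} \le 2C_{R}\kappa^{2}\exp\Big(-\tfrac{\kappa^{2}}{3}\theta^{-\frac{1}{2\alpha}}\Big)+6C_{R}\kappa^{2}\theta^{\frac{1}{2\alpha}},
\]
which matches \eqref{eq:instability-result-combine} with $c_{0}=1/3$ after enlarging $C_{R}$ by an absolute factor. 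In the low-frequency regime $0<\kappa^{2}<1$, I would invoke Theorem~\ref{thm:second-main}; since $\max\{1,\kappa^{2}\}=1$, the bound \eqref{eq:DN-est-improve} is already in the form \eqref{eq:instability-result-combine} with the same $c_{0}=1/3$ and an absolute $C_{R}$. Taking the maximum of the two constants produced in the two regimes yields a single $C_{R}$ working for all $\kappa>0$.

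There is no substantive obstacle: the argument is entirely a case split plus bookkeeping of explicit constants. The only mildly delicate point is to verify that the decay rate $c_{0}$ and the prefactors from the two antecedent theorems can be unified across the threshold $\kappa^{2}=1$, which is immediate from the displayed computations above, and that $\theta_{0}$ can be chosen independent of $\kappa$, which is ensured by our definition. Conceptually, the role of the factor $\max\{1,\kappa^{2}\}$ is precisely to interpolate between the low-frequency behavior, where the exponential decay rate in $\theta^{-1/(2\alpha)}$ is an absolute constant, and the high-frequency behavior, where that rate is amplified by $\kappa^{2}$ while the H\"older term simultaneously acquires the prefactor $\kappa^{2}$.
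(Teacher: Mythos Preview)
Your proposal is correct and is precisely the intended argument: the paper does not give a separate proof of this combined statement but simply presents it as the synthesis of Theorem~\ref{thm:main} and Theorem~\ref{thm:second-main}, and your dichotomy at $\kappa^{2}=1$ together with the choice of $\theta_{0}$ and the elementary inequalities $\kappa^{2}\le 1+\kappa^{2}\le 2\kappa^{2}$ (for $\kappa^{2}\ge 1$) carries this out cleanly. Your verification that $\kappa_{1}/4>1$ so that the low-frequency window is covered by Theorem~\ref{thm:second-main}, and that $c_{0}=1/3$ works uniformly across both regimes, fills in exactly the bookkeeping the paper leaves implicit.
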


This paper is organized as follows.  We list some preliminary materials
in Section~\ref{sec:Auxiliary-propositions}. Section~\ref{sec3} is a collection of some useful estimates needed in the proofs of main theorems. We then prove Theorem~\ref{thm:main} and Theorem~\ref{thm:second-main} in Section~\ref{sec4}. Like other works in the instability of the inverse problem, our proof is based on Kolmogorov's entropy theorem \cite{KT61}.

\section{\label{sec:Auxiliary-propositions}Preliminaries}

\subsection{Existence of $\theta$-discrete set for some neighborhood }

Fixing $r_{0}\in(0,1)$. Given any $\alpha>0$, $\theta>0$, and $\beta>0$,
we consider the following set 
\[
\mathcal{N}_{\alpha\beta}^{\theta}(B_{r_{0}}):=\begin{Bmatrix}\begin{array}{l|l}
f\ge 0 & {\rm supp}\,(f)\subset B_{r_{0}},\|f\|_{L^\infty}\le\theta,\|f\|_{\mathcal{C}^{\alpha}}\le\beta\end{array}\end{Bmatrix},
\]
where $B_{r_0}$ denotes the ball of radius $r_0$ centered at the origin. The following proposition can be found in \cite[Lemma 5.2]{ZZ19} (or in \cite{KT61} in a more abstract form),
see also \cite[Lemma 2]{Man01} for a direct proof for the special
case when $r_{0}=\frac{1}{2}$. 
\begin{prop}
\label{prop:Kolmogorov}There exists a constant $\mu>0$ such that
the following statement holds for all $\beta>0$ and for all $\theta\in(0,\mu\beta)$:
\[
\text{there exists a }\theta\text{-discrete }(\text{i.e. }\theta\text{-distinguishable})\text{ subset }Z\text{ of }(\mathcal{N}_{\alpha\beta}^{\theta}(B_{r_{0}}),\|\bullet\|_{L^{\infty}}),
\]
that is, $\|f_{1}-f_{2}\|_{L^{\infty}}\ge\theta$ for all $f_{1},f_{2}\in Z\subset\mathcal{N}_{\alpha\beta}^{\theta}(K)$.
Moreover, the cardinality of $Z$, denoted by $|Z|$, is bounded below
by 
\[
|Z|\ge\exp\bigg[2^{-(d+1)}\bigg(\frac{\mu\beta}{\theta}\bigg)^{\frac{d}{\alpha}}\bigg].
\]
\end{prop}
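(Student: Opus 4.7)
The plan is to construct $Z$ explicitly by tiling $B_{r_0}$ with disjoint rescaled bumps and indexing $Z$ by subsets of the tiles, an approach in the spirit of the original Kolmogorov--Tikhomirov argument. Fix once and for all a nonnegative test function $\phi \in C_c^\infty(\mathbb{R}^{d})$ with $\operatorname{supp}\phi \subset B_{1}$ and $\phi(0) = \|\phi\|_{L^{\infty}} = 1$, and let $A := \|\phi\|_{\mathcal{C}^{\alpha}}$. For a length scale $h > 0$ still to be chosen, place a finite lattice of centers $x_{1},\dots,x_{N}$ in $B_{r_0}$ with pairwise distance at least $2h$; a standard cubic packing yields $N \ge c_{d,r_{0}}\, h^{-d}$ for a positive constant $c_{d,r_{0}}$ depending only on $d$ and $r_{0}$, provided $h$ is small enough (say $h \le r_{0}/4$). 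Define the rescaled bumps $\phi_{i}(x) := \phi\!\left((x-x_{i})/h\right)$; they have pairwise disjoint supports contained in $B_{r_0}$, satisfy $\|\phi_{i}\|_{L^{\infty}} = 1$, and scale as $\|\phi_{i}\|_{\mathcal{C}^{\alpha}} \le A\, h^{-\alpha}$.

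For every subset $\sigma \subset \{1,\dots,N\}$ I would set
\[
f_{\sigma} := \theta \sum_{i \in \sigma} \phi_{i}, \qquad Z := \{\, f_{\sigma} : \sigma \subset \{1,\dots,N\}\,\}.
\]
Because the $\phi_i$ have disjoint supports, $f_\sigma$ is nonnegative, supported in $B_{r_0}$, and satisfies $\|f_{\sigma}\|_{L^{\infty}} \le \theta$ together with $\|f_{\sigma}\|_{\mathcal{C}^{\alpha}} \le A\theta\, h^{-\alpha}$. Choosing
\[
h := \bigl(A\theta/\beta\bigr)^{1/\alpha}
\]
calibrates the latter bound to exactly $\beta$, placing every $f_{\sigma}$ in $\mathcal{N}_{\alpha\beta}^{\theta}(B_{r_{0}})$. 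For the $\theta$-separation: if $\sigma \neq \sigma'$, pick any $i$ in their symmetric difference; by disjointness of supports $|f_{\sigma}(x_{i}) - f_{\sigma'}(x_{i})| = \theta\,\phi(0) = \theta$, so $\|f_{\sigma} - f_{\sigma'}\|_{L^{\infty}} \ge \theta$.

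To close the count, note that $|Z| = 2^{N}$ and, with the chosen $h$,
\[
N \ge c_{d,r_{0}}\, h^{-d} = c_{d,r_{0}}\, A^{-d/\alpha}\,(\beta/\theta)^{d/\alpha},
\]
so $\log|Z| \ge (\log 2)\, c_{d,r_{0}}\, A^{-d/\alpha}\,(\beta/\theta)^{d/\alpha}$. Selecting $\mu > 0$ small enough that
\[
\mu^{d/\alpha} \le 2^{d+1}(\log 2)\, c_{d,r_{0}}\, A^{-d/\alpha}
\]
then produces the advertised bound $|Z| \ge \exp\!\bigl[\,2^{-(d+1)}(\mu\beta/\theta)^{d/\alpha}\bigr]$. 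The two-sided role of the hypothesis $\theta < \mu\beta$ becomes transparent here: it forces $h = (A\theta/\beta)^{1/\alpha}$ to be small enough for the lattice packing inside $B_{r_{0}}$ to produce a positive $N$, and it simultaneously drives the lower bound on $|Z|$ beyond a trivial constant.

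The main piece of care required is the $\mathcal{C}^{\alpha}$ scaling of $\phi(\cdot/h)$ when $\alpha > 1$: writing $\alpha = k+\gamma$ with $k \in \mathbb{N}$ and $\gamma \in (0,1]$, each derivative of order $j \le k$ picks up a factor $h^{-j}$ and the $C^{\gamma}$ seminorm of the $k$-th derivative a factor $h^{-k-\gamma} = h^{-\alpha}$, so for $h \le 1$ every contribution is dominated by $A h^{-\alpha}$. Once this scaling bookkeeping is in place the argument is entirely combinatorial, with no genuine obstacle; the heart of the matter is simply that $2^{N}$ grows doubly exponentially in $(\beta/\theta)^{d/\alpha}$ because the number of disjoint $C^{\alpha}$-admissible bump locations does.
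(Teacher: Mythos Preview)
Your construction is correct and is precisely the bump-packing argument of Mandache \cite[Lemma~2]{Man01} (here carried out in $B_{r_0}$ rather than $B_{1/2}$), which is one of the references the paper cites in lieu of a proof; the paper itself does not give an independent argument for this proposition. One small point of care: for a sum of functions with pairwise disjoint supports the H\"older seminorm is not literally the maximum of the summands' seminorms but only bounded by twice that maximum (take $x$ and $y$ in two different supports and use that each bump vanishes at the other point), so your bound $\|f_\sigma\|_{\mathcal{C}^\alpha}\le A\theta h^{-\alpha}$ should carry an extra factor of $2$; this is harmless and is absorbed into the choice of $\mu$.
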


\subsection{Matrix representation via spherical harmonics}

As in \cite{Man01,ZZ19}, we will use  the set of $d$-dimensional spherical harmonics: 
\[
\mathbb{H}^{d}:=\begin{Bmatrix}\begin{array}{l|l}
Y_{mj} & m\ge 0,\, 1\le j\le p_{m}\end{array}\end{Bmatrix},
\]
where 
\begin{equation}
p_{m}:=\begin{pmatrix}m+d-1\\
d-1
\end{pmatrix}-\begin{pmatrix}m+d-3\\
d-1
\end{pmatrix}\le2(1+m)^{d-2},\label{eq:spherical-index}
\end{equation}
which is a complete orthogonal set in $L^{2}(\mathcal{S}^{d-1})$.
Indeed, $\|\bullet\|_{H^{s}(\mathcal{S}^{d-1})}$ is also equivalent
to the following norm: 
\begin{equation}
\bigg\|\sum_{m,j}a_{mj}Y_{mj}\bigg\|_{H^{s}(\mathcal{S}^{d-1})}^{2}:=\sum_{m,j}(1+m)^{2s}|a_{mj}|^{2},\label{eq:equiv}
\end{equation}
see e.g. \cite{Man01}. 

Given any bounded linear operator $\mathcal{A}:H^{s}(\mathcal{S}^{d-1})\rightarrow H^{-s}(\mathcal{S}^{d-1})$,
we define $a_{mjnk}:=\langle\mathcal{A}Y_{mj},Y_{nk}\rangle$ and consider
the Banach space:
\[
X_{s}:=\begin{Bmatrix}\begin{array}{l|l}
(a_{mjnk}) & \|(a_{mjnk})\|_{X_{s}}:={\displaystyle \sup_{mjnk}}(1+\max\{m,n\})^{\frac{d}{2}-s}|a_{mjnk}|\end{array}\end{Bmatrix}.
\]
(see \cite{Man01,ZZ19}). The following proposition can be found in \cite[Lemma 5.3]{ZZ19},
which is crucial in our work. 
\begin{prop}
\label{prop:matrix-repn}If $s\ge\frac{d}{2}$, then 
\[
\|\mathcal{A}\|_{s\rightarrow-s}\le4\sqrt{2}\|(a_{mjnk})\|_{X_{s}}.
\]
\end{prop}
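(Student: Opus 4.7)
The plan is to expand in the spherical harmonic basis, push $\mathcal{A}$ through that expansion, and reduce the operator-norm bound to decoupled one-dimensional sums. Fix $u=\sum_{mj}u_{mj}Y_{mj}\in H^{s}(\mathcal{S}^{d-1})$; by \eqref{eq:equiv} we may use $\|u\|_{H^{s}}^{2}=\sum_{mj}(1+m)^{2s}|u_{mj}|^{2}$. Testing $\mathcal{A}u$ against $Y_{nk}$ gives $\langle\mathcal{A}u,Y_{nk}\rangle=\sum_{mj}a_{mjnk}u_{mj}$, so by the analogous equivalent norm on $H^{-s}$,
\[
\|\mathcal{A}u\|_{H^{-s}}^{2}=\sum_{nk}(1+n)^{-2s}\Bigl|\sum_{mj}a_{mjnk}u_{mj}\Bigr|^{2}.
\]

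For each fixed $(n,k)$ I would apply Cauchy--Schwarz to extract the factor $(1+m)^{s}|u_{mj}|$:
\[
\Bigl|\sum_{mj}a_{mjnk}u_{mj}\Bigr|^{2}\le\Bigl(\sum_{mj}|a_{mjnk}|^{2}(1+m)^{-2s}\Bigr)\|u\|_{H^{s}}^{2}.
\]
Summing over $(n,k)$ reduces the task to bounding $\sum_{mjnk}(1+m)^{-2s}(1+n)^{-2s}|a_{mjnk}|^{2}$ by an absolute multiple of $M^{2}:=\|(a_{mjnk})\|_{X_{s}}^{2}$. Using $|a_{mjnk}|^{2}\le M^{2}(1+\max\{m,n\})^{2s-d}$ together with the elementary inequality
\[
(1+\max\{m,n\})^{2s-d}\le(1+m)^{2s-d}+(1+n)^{2s-d}\qquad(2s-d\ge 0),
\]
one symmetrises the coupled sum into
\[
M^{2}\Bigl(\sum_{mj}(1+m)^{-d}\Bigr)\Bigl(\sum_{nk}(1+n)^{-2s}\Bigr)+M^{2}\Bigl(\sum_{mj}(1+m)^{-2s}\Bigr)\Bigl(\sum_{nk}(1+n)^{-d}\Bigr).
\]

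Each resulting scalar series is controlled via the dimension count $p_m\le 2(1+m)^{d-2}$ from \eqref{eq:spherical-index}: both $\sum_{mj}(1+m)^{-d}$ and $\sum_{mj}(1+m)^{-2s}$ are then dominated by $2\sum_{m\ge 0}(1+m)^{-2}=\pi^{2}/3$, and it is here (in the second series) that the hypothesis $s\ge d/2$ is needed to guarantee convergence. Assembling the pieces and taking square roots yields $\|\mathcal{A}u\|_{H^{-s}}\le\sqrt{2}\,(\pi^{2}/3)\,M\,\|u\|_{H^{s}}$, which sits comfortably below the claimed $4\sqrt{2}\,M\,\|u\|_{H^{s}}$ since $\pi^{2}/3<4$.

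The main obstacle is one of bookkeeping rather than of ideas: one must carry the constant through the Cauchy--Schwarz step, the symmetrisation, and the dimension count to confirm that it really stays below $4\sqrt{2}$ uniformly in $d$ and $s$. The symmetrisation is precisely tailored so that the exponent $2s-d$ cancels against the matching weight on each variable, leaving clean tails $(1+m)^{-d}$ and $(1+n)^{-2s}$; combined with the bound on $p_m$, this produces an absolute constant and completes the argument.
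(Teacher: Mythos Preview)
Your argument is correct: the Cauchy--Schwarz step, the symmetrisation $(1+\max\{m,n\})^{2s-d}\le(1+m)^{2s-d}+(1+n)^{2s-d}$, and the use of $p_m\le 2(1+m)^{d-2}$ combine cleanly to give the constant $\sqrt{2}\,\pi^{2}/3<4\sqrt{2}$, with the hypothesis $s\ge d/2$ entering exactly where you say it does. Note, however, that the paper does not supply its own proof of this proposition; it simply quotes the result from \cite[Lemma~5.3]{ZZ19}. Your write-up therefore furnishes a self-contained verification that the paper defers to an external reference.
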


Proposition~\ref{prop:matrix-repn} suggests that one can interpret $a_{mjnk}:=\langle\mathcal{A}Y_{mj},Y_{nk}\rangle$
as the matrix representation of the bounded linear operator $\mathcal{A}:H^{s}(\mathcal{S}^{d-1})\rightarrow H^{-s}(\mathcal{S}^{d-1})$. 

\section{\label{sec3}Some useful estimates}

In this section, we would like to derive some useful estimates that are needed in the proofs of main theorems.
\subsection{Elliptic estimate}
\begin{lem}\label{lem:elliptic-est}
Fixing $\lambda>0$. Let $\kappa^{2}>0$, $q\in L^{\infty}(B_{1})$,
$f\in L^{2}(B_{1})$, and $v\in H_{0}^{1}(B_{1})$ be a solution to
\begin{equation}
\bigg[\Delta+q(x)+\kappa^{2}\bigg]v=f\quad\text{in}\;\;B_{1}.\label{eq:sch1}
\end{equation}
If 
\begin{equation}\label{imaginary}
\Im(q)(x)\ge\lambda\;\, \mbox{or}\;\, \Im(q(x))\le-\lambda\;\;\, a.e., 
\end{equation}
then 
\begin{equation}
\|v\|_{L^{2}(B_{1})}\le\frac{1}{\lambda}\|f\|_{L^{2}(B_{1})}.\label{eq:ellip1}
\end{equation}
\end{lem}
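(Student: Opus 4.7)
The plan is to run a standard energy estimate, but crucially to extract information by taking the imaginary part of the testing identity, which is where the hypothesis \eqref{imaginary} on $\Im(q)$ comes in. In the usual real setting one would only control a combination of gradient and $L^2$ terms involving $\kappa^{2}$, and at the spectrum one loses coercivity; here the sign-definite imaginary part of $q$ bypasses that obstruction entirely.

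First I would test the equation \eqref{eq:sch1} against $\bar v$ and integrate over $B_{1}$. Since $v\in H_{0}^{1}(B_{1})$, integration by parts gives
\begin{equation*}
-\int_{B_{1}}|\nabla v|^{2}\,dx+\int_{B_{1}}q(x)|v|^{2}\,dx+\kappa^{2}\int_{B_{1}}|v|^{2}\,dx=\int_{B_{1}}f\bar v\,dx,
\end{equation*}
with no boundary contribution. Next I would take the imaginary part of both sides. The terms $-\int|\nabla v|^{2}$ and $\kappa^{2}\int|v|^{2}$ are real, so they drop out, leaving
\begin{equation*}
\int_{B_{1}}\Im(q)(x)\,|v(x)|^{2}\,dx=\Im\int_{B_{1}}f\bar v\,dx.
\end{equation*}

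Now I would invoke the hypothesis \eqref{imaginary}: because $\Im(q)$ has a fixed sign and magnitude at least $\lambda$ almost everywhere, the left-hand side has absolute value bounded below by $\lambda\|v\|_{L^{2}(B_{1})}^{2}$. On the right, the Cauchy--Schwarz inequality gives $|\Im\int f\bar v|\le\|f\|_{L^{2}(B_{1})}\|v\|_{L^{2}(B_{1})}$. Combining these two bounds and dividing by $\|v\|_{L^{2}(B_{1})}$ (the case $v\equiv 0$ being trivial) yields \eqref{eq:ellip1}.

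There is no real obstacle: the entire argument is a one-line test followed by a sign consideration. The only point worth flagging is that the bound is frequency-free, i.e.\ independent of $\kappa^{2}$ and of any spectral assumption on $-(\Delta+q)$; this is precisely why the authors impose $\Im(q_{\ell})=1$ in the construction of Remark \ref{rem:lambda-choice}, so that the $\kappa$-uniform estimate \eqref{eq:ellip1} can later be used to track how $\Lambda_{q_{\ell}}$ depends on $\kappa$ without worrying about Dirichlet eigenvalues.
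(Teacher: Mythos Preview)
Your proof is correct and follows essentially the same approach as the paper: test against $\bar v$, take the imaginary part, and use the sign hypothesis on $\Im(q)$ together with Cauchy--Schwarz. The only cosmetic difference is that the paper applies Young's inequality $|\Im\int f\bar v|\le\frac{1}{2\lambda}\|f\|_{L^2}^2+\frac{\lambda}{2}\|v\|_{L^2}^2$ in place of your direct Cauchy--Schwarz-and-divide step, but the result and the idea are identical.
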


\begin{proof}
Multiplying \eqref{eq:sch1} by $\overline{v}$ and integrating
over $B_{1}$, we have 
\begin{equation}
-\int_{B_{1}}|\nabla v|^{2}\,dx+\int_{B_{1}}q(x)|v|^{2}\,dx+\kappa^{2}\int_{B_{1}}|v|^{2}\,dx=\int_{B_{1}}f\overline{v}\,dx.\label{eq:sch-test}
\end{equation}
Taking the imaginary part of \eqref{eq:sch-test}, we have 
\[
\lambda\|v\|_{L^{2}(B_{1})}^{2} \le \bigg|\int_{B_{1}}\Im(q(x))|v|^{2}\,dx\bigg|=\bigg|\Im\int_{B_{1}}f\overline{v}\,dx\bigg|\le\frac{1}{2\lambda}\|f\|_{L^{2}(B_{1})}^{2}+\frac{\lambda}{2}\|v\|_{L^{2}(B_{1})}^{2},
\]
which implies \eqref{eq:ellip1}. 
\end{proof}
The following elliptic estimate is useful in our proof. 
\begin{prop}
\label{prop:ellip2}Let $\lambda>0$, $\kappa^{2}>0$, $q\in L^{\infty}(B_{1})$,
$\phi\in H^{\frac{3}{2}}(B_{1})$, and $u\in H^{1}(B_{1})$ be a solution
to 
\begin{equation}
\begin{cases}
\bigg[\Delta+q(x)+\kappa^{2}\bigg]u=0 & \text{in}\;\;B_{1},\\
u=\phi & \text{on}\;\;\mathcal{S}^{d-1}.
\end{cases}\label{eq:sch-reg}
\end{equation}
Assume that \eqref{imaginary} holds. Then there exists a positive constant $C_{d}$ such that 
\begin{equation}
\|u\|_{L^{2}(B_{1})}\le C_{d}\bigg(\frac{1}{\lambda}(1+\|q\|_{L^{\infty}(B_{1})}+\kappa^{2})+1\bigg)\|\phi\|_{H^{\frac{3}{2}}(\mathcal{S}^{d-1})}.\label{eq:elliptic-est}
\end{equation}
Here and after, $C_{d}$ (and $C_d', C_d''$) are general constants depending only on dimension $d$.
\end{prop}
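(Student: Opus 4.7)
My plan is to reduce Proposition~\ref{prop:ellip2} to Lemma~\ref{lem:elliptic-est} via the standard trick of subtracting off an $H^2$ extension of the boundary datum. Since $\phi\in H^{\frac{3}{2}}(\mathcal{S}^{d-1})$, the usual right inverse of the trace operator yields $\Phi\in H^{2}(B_{1})$ with $\Phi|_{\mathcal{S}^{d-1}}=\phi$ and
\[
\|\Phi\|_{H^{2}(B_{1})}\le C_{d}\|\phi\|_{H^{\frac{3}{2}}(\mathcal{S}^{d-1})}.
\]
Because $u$ and $\Phi$ share the same trace on $\mathcal{S}^{d-1}$, the difference $v:=u-\Phi$ lies in $H_{0}^{1}(B_{1})$.

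A direct computation shows that $v$ satisfies the Dirichlet problem
\[
\bigl[\Delta+q(x)+\kappa^{2}\bigr]v=-\bigl[\Delta+q(x)+\kappa^{2}\bigr]\Phi=:f\quad\text{in}\;\;B_{1},
\]
and since $\Phi\in H^{2}(B_{1})$ and $q\in L^{\infty}(B_{1})$, we have $f\in L^{2}(B_{1})$ with
\[
\|f\|_{L^{2}(B_{1})}\le \|\Delta\Phi\|_{L^{2}(B_{1})}+(\|q\|_{L^{\infty}(B_{1})}+\kappa^{2})\|\Phi\|_{L^{2}(B_{1})}\le C_{d}(1+\|q\|_{L^{\infty}(B_{1})}+\kappa^{2})\|\phi\|_{H^{\frac{3}{2}}(\mathcal{S}^{d-1})}.
\]
Because $q$ still satisfies the sign condition \eqref{imaginary}, Lemma~\ref{lem:elliptic-est} applied to $v$ gives
\[
\|v\|_{L^{2}(B_{1})}\le\frac{1}{\lambda}\|f\|_{L^{2}(B_{1})}\le \frac{C_{d}}{\lambda}(1+\|q\|_{L^{\infty}(B_{1})}+\kappa^{2})\|\phi\|_{H^{\frac{3}{2}}(\mathcal{S}^{d-1})}.
\]

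Finally, the triangle inequality $\|u\|_{L^{2}(B_{1})}\le\|v\|_{L^{2}(B_{1})}+\|\Phi\|_{L^{2}(B_{1})}$ together with $\|\Phi\|_{L^{2}(B_{1})}\le C_{d}\|\phi\|_{H^{\frac{3}{2}}(\mathcal{S}^{d-1})}$ yields exactly \eqref{eq:elliptic-est}, with the additive constant $1$ coming from the $\|\Phi\|_{L^{2}}$ contribution and the $1/\lambda$ factor coming from applying the interior estimate to $v$. There is no real obstacle here — the argument is essentially bookkeeping; the only point that must be checked is that the extension $\Phi$ can be chosen in $H^{2}$ (rather than merely $H^{\frac{3}{2}}$), so that $\Delta\Phi\in L^{2}$ is controlled, and this is a standard consequence of the surjectivity of the trace map $H^{2}(B_{1})\to H^{\frac{3}{2}}(\mathcal{S}^{d-1})$ with bounded right inverse.
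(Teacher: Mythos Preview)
Your proposal is correct and follows essentially the same approach as the paper: extend $\phi$ to $\Phi\in H^{2}(B_{1})$ via a bounded right inverse of the trace, set $v=u-\Phi\in H_{0}^{1}(B_{1})$, apply Lemma~\ref{lem:elliptic-est} to $v$ with $f=-[\Delta+q+\kappa^{2}]\Phi$, and finish with the triangle inequality. The only difference is notational (the paper writes $\tilde\phi$ for your $\Phi$).
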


\begin{proof} Let $\tilde\phi\in H^{2}(B_{1})$ be an extension of $\phi$ satisfying
the inequality 
\begin{equation}
\|\tilde\phi\|_{H^{2}(B_{1})}\le C_{d}\|\phi\|_{H^{\frac{3}{2}}(\mathcal{S}^{d-1})}\label{eq:trace}
\end{equation}
for some positive constant $C_{d}$. By letting $v=u-\tilde\phi\in H_{0}^{1}(B_{1})$, we have 
\begin{equation}
\bigg[\Delta+q(x)+\kappa^{2}\bigg]v=f\quad\text{in}\;\;\,B_{1}, \label{eq:elliptic-regularity1}
\end{equation}
where $f=-\bigg[\Delta+q(x)+\kappa^{2}\bigg]\tilde\phi\in L^{2}(B_{1})$.
From \eqref{eq:ellip1}, we have 
\[
\|v\|_{L^{2}(B_{1})}\le\frac{1}{\lambda}\|f\|_{L^{2}(B_{1})}\le\frac{(1+\|q\|_{L^{\infty}(B_{1})}+\kappa^{2})}{\lambda}\|\tilde\phi\|_{H^{2}(B_{1})}.
\]
Since $\|u\|_{L^{2}(B_{1})}\le\|v\|_{L^{2}(B_{1})}+\|\tilde\phi\|_{L^{2}(B_{1})}$,
together with \eqref{eq:trace}, we have 
\begin{align*}
\|u\|_{L^{2}(B_{1})} & \le\|v\|_{L^{2}(B_{1})}+\|\tilde\phi\|_{L^{2}(B_{1})}\\
 & \le\bigg(\frac{1}{\lambda}(1+\|q\|_{L^{\infty}(B_{1})}+\kappa^{2})+1\bigg)\|\tilde\phi\|_{H^{2}(B_{1})}\\
 & \le C_{d}\bigg(\frac{1}{\lambda}(1+\|q\|_{L^{\infty}(B_{1})}+\kappa^{2})+1\bigg)\|\phi\|_{H^{\frac{3}{2}}(\mathcal{S}^{d-1})},
\end{align*}
which implies \eqref{eq:elliptic-est}.
\end{proof}

\subsection{\label{sec:Matrix-repn}Matrix representation}

Let $q_{{\rm ref}}\in L^{\infty}(B_{1})$ satisfy \eqref{imaginary}. For $R>0$, and $0<r_{0}<1$,
we define 
\[
B_{+,R}^{\infty,r_{0}}:=\begin{Bmatrix}\begin{array}{l|l}
q\text{ is real-valued} & {\rm supp}\,(q)\subset\overline{B_{r_{0}}},0\le q\le R\;\, a.e.\end{array}\end{Bmatrix}.
\]
For each $\kappa^{2}>0$, we consider the mapping 
\[
\Gamma(q;q_{{\rm ref}}):=\Lambda_{q+q_{{\rm ref}}}-\Lambda_{q_{{\rm ref}}}\quad\text{for all }\;\,q\in B_{+,R}^{\infty,r_{0}}.
\]
Let $u_{mj}\in H^{1}(B_{1})$
be the unique solution to 
\[
\begin{cases}
(\Delta+q_{{\rm ref}}(x)+q(x)+\kappa^{2})u_{mj}=0 & \text{in}\;\;B_{1},\\
u_{mj}=Y_{mj} & \text{on}\;\;\mathcal{S}^{d-1}.
\end{cases}
\]
Similarly, let $\mathring{u}_{mj}\in H^{1}(B_{1})$ be the unique
solution to 
\[
\begin{cases}
(\Delta+q_{{\rm ref}}(x)+\kappa^{2})\mathring{u}_{mj}=0 & \text{in}\;\;B_{1},\\
\mathring{u}_{mj}=Y_{mj} & \text{on}\;\;\mathcal{S}^{d-1}.
\end{cases}
\]
We also define $v_{mj}\in H^{1}(B_{1})$ the unique solution to
\[
\begin{cases}
(\Delta+\overline{q_{{\rm ref}}(x)}+q(x)+\kappa^{2})v_{mj}=0 & \text{in}\;\;B_{1},\\
v_{mj}=Y_{mj} & \text{on}\;\;\mathcal{S}^{d-1},
\end{cases}
\]
and let $\mathring{v}_{mj}\in H^{1}(B_{1})$ be the unique solution
to 
\[
\begin{cases}
(\Delta+\overline{q_{{\rm ref}}(x)}+\kappa^{2})\mathring{v}_{mj}=0 & \text{in}\;\;B_{1},\\
\mathring{v}_{mj}=Y_{mj} & \text{on}\;\;\mathcal{S}^{d-1}.
\end{cases}
\]
By \eqref{eq:equiv} and \eqref{eq:elliptic-est}, we can derive 
\begin{subequations}
\begin{align}
\|u_{mj}\|_{L^{2}(B_{1})} & \le C_{d}'\bigg(\frac{1}{\lambda}(1+R+\lambda+\kappa^{2})+1\bigg)(1+m)^{\frac{3}{2}},\label{eq:umj-est1a}\\
\|\mathring{u}_{mj}\|_{L^{2}(B_{1})} & \le C_{d}'\bigg(\frac{1}{\lambda}(1+R+\lambda+\kappa^{2})+1\bigg)(1+m)^{\frac{3}{2}},\label{eq:umj-est1b}\\
\|v_{mj}\|_{L^{2}(B_{1})} & \le C_{d}'\bigg(\frac{1}{\lambda}(1+R+\lambda+\kappa^{2})+1\bigg)(1+m)^{\frac{3}{2}},\label{eq:vmj-est1c}\\
\|\mathring{v}_{mj}\|_{L^{2}(B_{1})} & \le C_{d}'\bigg(\frac{1}{\lambda}(1+R+\lambda+\kappa^{2})+1\bigg)(1+m)^{\frac{3}{2}}.\label{eq:vmj-est1d}
\end{align}
\end{subequations}

Write $\tilde{Y}_{mj}(x):=|x|^{m}Y_{mj}(x/|x|)$, which is harmonic in $B_{1}$. We can prove
\begin{lem}
\label{lem:matrix-repn}Let $q_{{\rm ref}}\in L^{\infty}(B_{1})$ satisfy \eqref{imaginary} and $q\in B_{+,R}^{\infty,r_{0}}$.
Then 
\begin{align}
\langle\Gamma(q;q_{{\rm ref}})Y_{mj},Y_{nk}\rangle & =I_{mjnk}^{(1)}(q)+I_{mjnk}^{(2)}(q_{{\rm ref}})\nonumber \\
 & =L_{mjnk}^{(1)}(q)+L_{mjnk}^{(2)}(q_{{\rm ref}}),\label{eq:matrix-repn}
\end{align}
where 
\begin{align*}
I_{mjnk}^{(1)}(q) & =-\int_{B_{r_{0}}}q(x)u_{mj}\overline{\tilde{Y}_{nk}}\,dx,\\
I_{mjnk}^{(2)}(q_{{\rm ref}}) & =-\int_{B_{1}}(q_{{\rm ref}}(x)+\kappa^{2})(u_{mj}-\mathring{u}_{mj})\overline{\tilde{Y}_{nk}}\,dx,\\
L_{mjnk}^{(1)}(q) & =-\int_{B_{r_{0}}}q(x)\overline{v_{nk}}\tilde{Y}_{mj}\,dx,\\
L_{mjnk}^{(2)}(q_{{\rm ref}}) & =-\int_{B_{1}}(\overline{q_{{\rm ref}}(x)}+\kappa^{2})\overline{(v_{nk}-\mathring{v}_{nk})}\tilde{Y}_{mj}\,dx.
\end{align*}
\end{lem}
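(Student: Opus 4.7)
The plan is to derive both representations from two distinct applications of Green's second identity; the first identity follows rather directly, whereas the second needs a preliminary step exploiting the symmetry of the Dirichlet-to-Neumann map under passage to the conjugate potential.

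For the identity $\langle\Gamma(q;q_{\rm ref})Y_{mj},Y_{nk}\rangle = I^{(1)}_{mjnk}(q) + I^{(2)}_{mjnk}(q_{\rm ref})$, I would start by setting $w := u_{mj} - \mathring{u}_{mj}$, which lies in $H_{0}^{1}(B_{1})$. Subtracting the two Schr\"odinger equations satisfied by $u_{mj}$ and $\mathring{u}_{mj}$ yields
\[
\Delta w = -(q_{\rm ref}+\kappa^{2}) w - q\, u_{mj} \quad\text{in }B_{1}.
\]
Then, since $\overline{\tilde{Y}_{nk}}$ is harmonic in $B_{1}$ and coincides with $\overline{Y_{nk}}$ on $\mathcal{S}^{d-1}$, applying Green's second identity to the pair $(w,\overline{\tilde{Y}_{nk}})$ makes the term with $\Delta\overline{\tilde{Y}_{nk}}$ and the boundary term $\int_{\mathcal{S}^{d-1}} w\,\partial_r \overline{\tilde{Y}_{nk}}\,dS$ vanish, leaving
\[
\langle\Gamma Y_{mj},Y_{nk}\rangle \;=\; \int_{\mathcal{S}^{d-1}} \partial_r w \cdot \overline{Y_{nk}}\, dS \;=\; \int_{B_{1}} \Delta w \cdot \overline{\tilde{Y}_{nk}}\,dx.
\]
Substituting the formula for $\Delta w$ and using $\mathrm{supp}(q)\subset\overline{B_{r_{0}}}$ produces exactly $I^{(1)}_{mjnk}(q) + I^{(2)}_{mjnk}(q_{\rm ref})$.

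For the identity $\langle\Gamma Y_{mj},Y_{nk}\rangle = L^{(1)}_{mjnk}(q) + L^{(2)}_{mjnk}(q_{\rm ref})$, the roles of $u$ and $v$ must first be swapped. Because $q$ and $\kappa^{2}$ are real, $\overline{v_{nk}}$ solves the \emph{same} Schr\"odinger equation as $u_{mj}$, namely the one with potential $q_{\rm ref}+q$. Applying Green's identity to $(u_{mj},\overline{v_{nk}})$, the volume integrand cancels identically, producing the symmetry relation
\[
\int_{\mathcal{S}^{d-1}} \overline{Y_{nk}}\,\partial_r u_{mj}\,dS \;=\; \int_{\mathcal{S}^{d-1}} Y_{mj}\,\overline{\partial_r v_{nk}}\,dS,
\]
and the same identity for the pair $(\mathring{u}_{mj},\mathring{v}_{nk})$. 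Subtracting rewrites $\langle\Gamma Y_{mj},Y_{nk}\rangle$ as $\int_{\mathcal{S}^{d-1}} Y_{mj}\,\overline{\partial_r(v_{nk}-\mathring{v}_{nk})}\,dS$. A second application of Green's identity, this time with the harmonic function $\tilde{Y}_{mj}$ and the zero-trace function $\overline{v_{nk}-\mathring{v}_{nk}}\in H_0^1(B_1)$, together with the conjugated PDEs for $\overline{v_{nk}}$ and $\overline{\mathring{v}_{nk}}$, produces $L^{(1)}_{mjnk}(q) + L^{(2)}_{mjnk}(q_{\rm ref})$.

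The computations are essentially routine integration-by-parts manipulations, so I do not anticipate any genuine obstacle. The only place demanding real care is the bookkeeping of complex conjugates: the potential $q\in B_{+,R}^{\infty,r_{0}}$ is real, whereas $q_{\rm ref}$ is generically complex (with a non-vanishing imaginary part by hypothesis), so conjugation acts nontrivially on $q_{\rm ref}$ but not on $q$ or $\kappa^{2}$. The $H^{1}$ regularity of the four families of solutions $u_{mj},\mathring{u}_{mj},v_{mj},\mathring{v}_{mj}$, combined with the smoothness of the harmonic extensions $\tilde{Y}_{mj},\tilde{Y}_{nk}$, is more than sufficient to rigorously justify every trace and every volume integration appearing in the argument.
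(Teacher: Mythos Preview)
Your proposal is correct and follows essentially the same approach as the paper. For the first identity you proceed exactly as the paper does; for the second identity the paper invokes the fact that the adjoint of $\Gamma(q;q_{\rm ref})$ is $\Gamma(q;\overline{q_{\rm ref}})$ and then repeats the integration-by-parts argument, whereas you derive that adjoint relation explicitly via Green's identity applied to the pair $(u_{mj},\overline{v_{nk}})$ before doing the same second step---these are the same argument, yours being slightly more self-contained.
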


\begin{proof}
We begin to prove 
\begin{equation}
\langle\Gamma(q;q_{{\rm ref}})Y_{mj},Y_{nk}\rangle=I_{mjnk}^{(1)}(q;q_{{\rm ref}})+I_{mjnk}^{(2)}(q;q_{{\rm ref}}).\label{eq:part1}
\end{equation}
Using integration by parts, we have 
\begin{align}
\langle\Gamma(q;q_{{\rm ref}})Y_{mj},Y_{nk}\rangle & =\int_{\mathcal{S}^{d-1}}\partial_{r}(u_{mj}-\mathring{u}_{mj})\overline{Y_{nk}}\,ds\nonumber \\
 & =\int_{B_{1}}\Delta(u_{mj}-\mathring{u}_{mj})\overline{\tilde{Y}_{nk}}\,dx.\label{eq:int-by-part}
\end{align}
In the computation of \eqref{eq:int-by-part}, we used the fact $u_{mj}=\mathring{u}_{mj}$
on $\mathcal{S}^{d-1}$ and $\tilde{Y}_{nk}$ is harmonic in $B_{1}$.
Since 
\[
(\Delta+q_{{\rm ref}}(x)+q(x)+\kappa^{2})u_{mj}=0\quad\text{and}\quad(\Delta+q_{{\rm ref}}(x)+\kappa^{2})\mathring{u}_{mj}=0,
\]
and ${\rm supp}\,(q)\subset B_{r_{0}}$, \eqref{eq:int-by-part} implies  
\begin{align*}
 & \langle\Gamma(q;q_{{\rm ref}})Y_{mj},Y_{nk}\rangle\\
 & =-\int_{B_{r_{0}}}q(x)u_{mj}\overline{\tilde{Y}_{nk}}\,dx-\int_{B_{1}}(q_{{\rm ref}}(x)+\kappa^{2})(u_{mj}-\mathring{u}_{mj})\overline{\tilde{Y}_{nk}}\,dx,
\end{align*}
which is exactly \eqref{eq:part1}. 

Next, we want to establish 
\begin{equation}
\langle\Gamma(q;q_{{\rm ref}})Y_{mj},Y_{nk}\rangle=L_{mjnk}^{(1)}(q;q_{{\rm ref}})+L_{mjnk}^{(2)}(q;q_{{\rm ref}}).\label{eq:part2}
\end{equation}
Since the adjoint operator of $\Gamma(q;q_{{\rm ref}})$ is $\Gamma(q;\overline{q_{{\rm ref}}})$,
similar to the derivation of \eqref{eq:int-by-part}, we have 
\begin{align*}
\langle\Gamma(q;q_{{\rm ref}})Y_{mj},Y_{nk}\rangle & =\langle Y_{mj},\Gamma({q};\overline{q_{{\rm ref}}})Y_{nk}\rangle\\
 & =\int_{B_{1}}\overline{\Delta(v_{nk}-\mathring{v}_{nk})}\tilde{Y}_{mj}\,dx.
\end{align*}
Since 
\[
(\Delta+\overline{q_{{\rm ref}}(x)}+q(x)+\kappa^{2})v_{mj}=0\quad\text{and}\quad(\Delta+\overline{q_{{\rm ref}}(x)}+\kappa^{2})\mathring{v}_{mj}=0,
\]
and ${\rm supp}\,(q)\subset B_{r_{0}}$, we obtain
\begin{align*}
 & \langle\Gamma(q;q_{{\rm ref}})Y_{mj},Y_{nk}\rangle=\langle Y_{mj},\Gamma({q};\overline{q_{{\rm ref}}})Y_{nk}\rangle\\
 & =-\int_{B_{r_{0}}}q(x)\overline{v_{nk}}\tilde{Y}_{mj}\,dx-\int_{B_{1}}(\overline{q_{{\rm ref}}(x)}+\kappa^{2})\overline{(v_{nk}-\mathring{v}_{nk})}\tilde{Y}_{mj}\,dx
\end{align*}
and hence \eqref{eq:part2}. 
\end{proof}
Next, we define 
\begin{equation}
M_{mjnk}^{(1)}(q):=\begin{cases}
I_{mjnk}^{(1)}(q) & \text{if}\,\,n\ge m,\\
L_{mjnk}^{(1)}(q) & \text{if}\,\,n<m,
\end{cases}\quad M_{mjnk}^{(2)}(q_{{\rm ref}}):=\begin{cases}
I_{mjnk}^{(2)}(q_{{\rm ref}}) & \text{if}\,\,n\ge m,\\
L_{mjnk}^{(2)}(q_{{\rm ref}}) & \text{if}\,\,n<m.
\end{cases} \label{eq:M-mjnk}
\end{equation}
Therefore, from \eqref{eq:matrix-repn}, we have 
\begin{equation}
|\langle\Gamma(q;q_{{\rm ref}})Y_{mj},Y_{nk}\rangle|\le|M_{mjnk}^{(1)}(q)|+|M_{mjnk}^{(2)}(q_{{\rm ref}})|.\label{eq:matrix-repn2}
\end{equation}

\begin{lem}
\label{lem:matrix-repn-est}Let $q_{{\rm ref}}\in L^{\infty}(B_{1})$ be such that \eqref{imaginary} holds. Let $\ell=\max\{m,n\}$,
$\kappa^{2}>0$, and $q\in B_{+,R}^{\infty,r_{0}}$. Then 
\begin{align}
|M_{mjnk}^{(1)}(q)| & \le C_{d}''\Phi(R,\lambda,\kappa)(1+\ell)r_{0}^{\ell},\label{eq:est-M1}\\
|M_{mjnk}^{(2)}(q_{{\rm ref}})| & \le C_{d}''\Phi(R,\lambda,\kappa)(\|q_{{\rm ref}}\|_{L^{\infty}(B_{1})}+\kappa^{2})(1+\ell)\label{eq:est-M2}
\end{align}
with $C_d''>1$, where 
\begin{equation}
\Phi(R,\lambda,\kappa)=(R+1)\bigg(\frac{1}{\lambda}(1+R+\lambda+\kappa^{2})+1\bigg).\label{eq:temp}
\end{equation}
\end{lem}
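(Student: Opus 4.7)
The plan is to apply Cauchy--Schwarz to each of the four integrals in Lemma~\ref{lem:matrix-repn}, combined with the $L^2$-bounds \eqref{eq:umj-est1a}--\eqref{eq:vmj-est1d} on $u_{mj},\mathring{u}_{mj},v_{mj},\mathring{v}_{mj}$ and the explicit formulas
\[
\|\tilde{Y}_{\ell\cdot}\|_{L^2(B_{r_0})}^2 = \frac{r_0^{2\ell+d}}{2\ell+d}, \qquad \|\tilde{Y}_{\ell\cdot}\|_{L^2(B_1)}^2 = \frac{1}{2\ell+d},
\]
which follow from separation of variables and the $L^2(\mathcal{S}^{d-1})$-orthonormality of $\{Y_{mj}\}$. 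The essential observation motivating the case split in \eqref{eq:M-mjnk} is that in both $M^{(1)}$ and $M^{(2)}$ the harmonic polynomial factor $\tilde{Y}$ always carries the \emph{larger} index $\ell = \max\{m,n\}$, whereas the solution factor ($u_{mj}$ or $v_{nk}$) carries the smaller index; this is why $r_0^{\ell}$ rather than $r_0^{\min\{m,n\}}$ can appear in \eqref{eq:est-M1}.

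For \eqref{eq:est-M1} in the case $n \geq m$, Cauchy--Schwarz applied to $I^{(1)}_{mjnk}(q)$ together with \eqref{eq:umj-est1a} gives
\[
|I^{(1)}_{mjnk}(q)| \leq R \cdot C_d'\bigg(\frac{1+R+\lambda+\kappa^2}{\lambda}+1\bigg)(1+m)^{3/2} \cdot \frac{r_0^{\ell+d/2}}{\sqrt{2\ell+d}}.
\]
For $d \geq 2$ and $n \geq m$ one has $(1+m)^{3/2}/\sqrt{2\ell+d} \leq (1+\ell)/\sqrt{2}$; together with $r_0^{d/2} \leq 1$ and $R \leq R+1$ (which turns the prefactor into $\Phi(R,\lambda,\kappa)$), this yields \eqref{eq:est-M1}. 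The case $n < m$ is entirely symmetric, using $L^{(1)}$ in place of $I^{(1)}$, the bound \eqref{eq:vmj-est1c} on $v_{nk}$, and $\tilde{Y}_{mj}$ in place of $\tilde{Y}_{nk}$.

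For \eqref{eq:est-M2} the same scheme applies on $B_1$, so the factor $r_0^{\ell}$ is absent. The only new ingredient is controlling $u_{mj} - \mathring{u}_{mj}$ (resp.\ $v_{nk} - \mathring{v}_{nk}$), for which the crude triangle inequality together with \eqref{eq:umj-est1a}--\eqref{eq:umj-est1b} suffices:
\[
\|u_{mj} - \mathring{u}_{mj}\|_{L^2(B_1)} \leq 2C_d'\bigg(\frac{1+R+\lambda+\kappa^2}{\lambda}+1\bigg)(1+m)^{3/2};
\]
the prefactor $(\|q_{\rm ref}\|_{L^\infty}+\kappa^2)$ in \eqref{eq:est-M2} is simply pulled out of the integral. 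The main bookkeeping obstacle is making the final power of $(1+\ell)$ come out to $1$ rather than $3/2$, which is precisely what the pairing forced by \eqref{eq:M-mjnk} achieves: the harmonic polynomial's $L^2$-norm supplies a damping $(2\ell+d)^{-1/2}$ that reduces $(1+m)^{3/2} \leq (1+\ell)^{3/2}$ down to $(1+\ell)^1$.
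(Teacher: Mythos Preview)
Your proof is correct and follows essentially the same route as the paper: Cauchy--Schwarz on each integral, the $L^2$ bounds \eqref{eq:umj-est1a}--\eqref{eq:vmj-est1d}, the explicit computation of $\|\tilde Y_{\ell\cdot}\|_{L^2}$, and the observation that the case split in \eqref{eq:M-mjnk} forces the harmonic factor to carry the larger index so that $(1+m)^{3/2}/\sqrt{2\ell+d}\le (1+\ell)/\sqrt{2}$. If anything, you state the motivation for the $I$/$L$ split more explicitly than the paper does.
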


\begin{rem}\label{rem:observation1}
\label{rem:net}Denote $\tau:=s-\frac{d+2}{2}$. It follows from \eqref{eq:est-M1}
and \eqref{eq:est-M2} that 
\begin{align}
(1+\ell)^{\frac{d}{2}-s}|M_{mjnk}^{(1)}(q)| & \le C_{d}''\Phi(R,\lambda,\kappa)(1+\ell)^{-\tau}r_{0}^{\ell},\label{eq:est-M1-1}\\
(1+\ell)^{\frac{d}{2}-s}|M_{mjnk}^{(2)}(q_{{\rm ref}})| & \le C_{d}''\Phi(R,\lambda,\kappa)(\|q_{{\rm ref}}\|_{L^{\infty}(B_{1})}+\kappa^{2})(1+\ell)^{-\tau}.\label{eq:est-M2-1}
\end{align}
Therefore, if $s\ge\frac{d+2}{2}$ (i.e. $\tau \ge 0$), we know that 
\[
\|(\langle\Gamma(q;q_{{\rm ref}})Y_{mj},Y_{nk}\rangle)\|_{X_{s}}=\sup_{mjnk}(1+\ell)^{\frac{d}{2}-s}|\langle\Gamma(q;q_{{\rm ref}})Y_{mj},Y_{nk}\rangle|<\infty.
\]
If we define $\Gamma_{mjnk}^{q_{{\rm ref}}}(q):=\langle\Gamma(q;q_{{\rm ref}})Y_{mj},Y_{nk}\rangle$,
we have 
\begin{equation}
(\Gamma_{mjnk}^{q_{{\rm ref}}}(B_{+,R}^{\infty,r_{0}}))\subset X_{s}\quad\text{for all }s \ge \frac{d+2}{2}.\label{eq:net}
\end{equation}
\end{rem}

\begin{proof}
[Proof of Lemma~{\rm \ref{lem:matrix-repn-est}}] Like in \cite[(16)]{ZZ19}, using \eqref{eq:umj-est1a}  yields
\begin{align*}
|I_{mjnk}^{(1)}(q)| & =\bigg|\int_{B_{r_{0}}}q(x)|x|^{n}\overline{Y_{nk}(x/|x|)}u_{mj}(x)\,dx\bigg|\\
 & \le R\bigg(\int_{0}^{r_{0}}r^{2n+d-1}\,dr\bigg)^{\frac{1}{2}}\|u_{mj}\|_{L^{2}(B_{1})}\\
 & \le C_{d}'R\bigg(\frac{1}{\lambda}(1+R+\lambda+\kappa^{2})+1\bigg)\frac{(1+m)^{\frac{3}{2}}}{\sqrt{2n+d}}r_{0}^{n+\frac{d}{2}}\\
 & \le C_{d}'R\bigg(\frac{1}{\lambda}(1+R+\lambda+\kappa^{2})+1\bigg)\frac{(1+m)^{\frac{3}{2}}}{\sqrt{2n+d}}r_{0}^{n}.
\end{align*}
Similarly, from \eqref{eq:vmj-est1c}, we have 
\[
|L_{mjnk}^{(1)}(q)|\le C_{d}'R\bigg(\frac{1}{\lambda}(1+R+\lambda+\kappa^{2})+1\bigg)\frac{(1+n)^{\frac{3}{2}}}{\sqrt{2m+d}}r_{0}^{m}.
\]
Therefore, 
\begin{align*}
|M_{mjnk}^{(1)}(q)| & \le C_{d}'R\bigg(\frac{1}{\lambda}(1+R+\lambda+\kappa^{2})+1\bigg)\frac{(1+n)^{\frac{3}{2}}}{\sqrt{2n+d}}r_{0}^{n}\quad\text{if}\;\;n\ge m,\\
|M_{mjnk}^{(1)}(q)| & \le C_{d}'R\bigg(\frac{1}{\lambda}(1+R+\lambda+\kappa^{2})+1\bigg)\frac{(1+m)^{\frac{3}{2}}}{\sqrt{2m+d}}r_{0}^{m}\quad\text{if}\;\;n<m,
\end{align*}
which gives \eqref{eq:est-M1}. 

Now, we turn to \eqref{eq:est-M2}. Following \cite[(16)]{ZZ19} again,
combining \eqref{eq:umj-est1a} and \eqref{eq:umj-est1b} implies
\begin{align*}
|I_{mjnk}^{(2)}(q_{{\rm ref}})| & =\bigg|\int_{B_{1}}(q_{{\rm ref}}(x)+\kappa^{2})(u_{mj}-\mathring{u}_{mj})|x|^{n}\overline{Y_{nk}(x/|x|)}\,dx\bigg|\\
 & \le\bigg(\|q_{{\rm ref}}\|_{L^{\infty}(B_{1})}+\kappa^{2}\bigg)\bigg(\int_{0}^{1}r^{2n+d-1}\,dr\bigg)^{\frac{1}{2}}\bigg(\|u_{mj}\|_{L^{2}(B_{1})}+\|\mathring{u}_{mj}\|_{L^{2}(B_{1})}\bigg)\\
 & \le2C_{d}'\bigg(\|q_{{\rm ref}}\|_{L^{\infty}(B_{1})}+\kappa^{2}\bigg)\bigg(\frac{1}{\lambda}(1+R+\lambda+\kappa^{2})+1\bigg)\frac{(1+m)^{\frac{3}{2}}}{\sqrt{2n+d}}.
\end{align*}
Likewise, by \eqref{eq:vmj-est1c} and \eqref{eq:vmj-est1d}, we have 
\[
|L_{mjnk}^{(2)}(q_{{\rm ref}})|\le2C_{d}'\bigg(\|q_{{\rm ref}}\|_{L^{\infty}(B_{1})}+\kappa^{2}\bigg)\bigg(\frac{1}{\lambda}(1+R+\lambda+\kappa^{2})+1\bigg)\frac{(1+n)^{\frac{3}{2}}}{\sqrt{2m+d}}.
\]
Then we can derive \eqref{eq:est-M2} as above. 
\end{proof}

\subsection{Construction of a net}

In view of \eqref{eq:net} in Remark~\ref{rem:net}, we want to construct a $\delta$-net $Y$ of $((\Gamma_{mjnk}^{q_{{\rm ref}}}(B_{+,R}^{\infty,r_{0}})),\|\bullet\|_{X_{s}})$,
which is not too-large. Precisely, we want to obtain the following
proposition. 
\begin{prop}
\label{prop:net-const}Assume that $q_{{\rm ref}}\in L^{\infty}(B_{1})$ satisfies \eqref{imaginary}. Let $s>\frac{d+2}{2}$
and $\kappa^{2}>0$. Define $\tau:=s-\frac{d+2}{2}$. Given any $0<\delta<\Phi$, where $\Phi=\Phi(R,\lambda,\kappa)$
is the function given in \eqref{eq:temp}, there exists a $\delta$-net $Y$ of $((\Gamma_{mjnk}^{q_{{\rm ref}}}(B_{+,R}^{\infty,r_{0}})),\|\bullet\|_{X_{s}})$
such that 
\begin{equation}
\log|Y|\le\eta\bigg[1+\log\bigg(1+\frac{\Phi}{\delta}\bigg)+(\|q_{{\rm ref}}\|_{L^{\infty}(B_{1})}+\kappa^{2})\frac{\Phi}{\delta}+\bigg((\|q_{{\rm ref}}\|_{L^{\infty}(B_{1})}+\kappa^{2})\frac{\Phi}{\delta}\bigg)^{\frac{1}{\tau}}\bigg]^{2d}\label{eq:net-size}
\end{equation}
for some constant $\eta=\eta(d,s,r_{0})$. 
\end{prop}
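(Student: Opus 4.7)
The plan is to construct $Y$ in two stages. First, truncate the ``matrix'' $(\Gamma_{mjnk}^{q_{\rm ref}}(q))$ at a high-frequency cutoff $N_{0}$ so that every entry with $\max\{m,n\}>N_{0}$ contributes at most $\delta/2$ to the $X_{s}$-norm (and can thus be replaced uniformly by $0$). Second, $(\delta/2)$-discretize each of the finitely many remaining entries as a point in $\mathbb{C}$, and take the product of these independent covers.

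For the truncation step, Remark~\ref{rem:observation1} combined with \eqref{eq:matrix-repn2} gives, for every $q\in B_{+,R}^{\infty,r_{0}}$ and with $\ell=\max\{m,n\}$,
\[
(1+\ell)^{\frac{d}{2}-s}|\Gamma_{mjnk}^{q_{\rm ref}}(q)|\le C_{d}''\Phi(1+\ell)^{-\tau}r_{0}^{\ell}+C_{d}''\Phi(\|q_{\rm ref}\|_{L^{\infty}(B_{1})}+\kappa^{2})(1+\ell)^{-\tau}.
\]
To make the right-hand side $\le \delta/2$ whenever $\ell>N_{0}$, the exponential factor $r_{0}^{\ell}$ imposes $N_{0}\gtrsim \log(1+\Phi/\delta)$, while the polynomial factor $(1+\ell)^{-\tau}$ imposes $N_{0}\gtrsim \bigl((\|q_{\rm ref}\|_{L^{\infty}(B_{1})}+\kappa^{2})\Phi/\delta\bigr)^{1/\tau}$. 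I would take $N_{0}$ to be a universal multiple of the sum of these two quantities; the hypothesis $\tau>0$ is crucially used here to guarantee a finite cutoff from the polynomial decay.

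For the discretization, each weighted entry with $\ell\le N_{0}$ is a complex number of modulus $\le C\Phi(1+\|q_{\rm ref}\|_{L^{\infty}(B_{1})}+\kappa^{2})$, so the disk of this radius in $\mathbb{C}$ can be $(\delta/2)$-covered by $\lesssim \bigl(1+(\|q_{\rm ref}\|_{L^{\infty}(B_{1})}+\kappa^{2})\Phi/\delta\bigr)^{2}$ points. The number of admissible tuples $(m,j,n,k)$ with $\max\{m,n\}\le N_{0}$ is controlled via \eqref{eq:spherical-index} by $\sum_{m,n\le N_{0}}p_{m}p_{n}\le C_{d}N_{0}^{2(d-1)}$. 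Taking the product of the independent per-entry covers yields a $\delta$-net $Y$ with
\[
\log|Y|\le C_{d}N_{0}^{2(d-1)}\log\!\Bigl(1+\tfrac{(\|q_{\rm ref}\|_{L^{\infty}(B_{1})}+\kappa^{2})\Phi}{\delta}\Bigr).
\]
Substituting the choice of $N_{0}$ from the truncation step and using the elementary inequalities $\log(1+x)\le x$ and $a^{p}b\le (a+b)^{p+1}$ to regroup all the arising factors into the single bracket in \eqref{eq:net-size} then produces \eqref{eq:net-size} with $\eta=\eta(d,s,r_{0})$.

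The analytic content is light: everything rests on the two decay rates of Lemma~\ref{lem:matrix-repn-est} and a standard volumetric covering argument in the style of \cite{Man01,ZZ19}. The main obstacle is the bookkeeping of the final step. The exponent $2d$ in \eqref{eq:net-size} (rather than $2(d-1)$) must absorb the per-entry cover-logarithm; the middle linear term $(\|q_{\rm ref}\|_{L^{\infty}(B_{1})}+\kappa^{2})\Phi/\delta$, in which $\tau$ does \emph{not} appear, must be recovered by using the crude bound $\log(1+x)\le x$ on the cover-logarithm; and one must keep the final constants independent of $\kappa$ and of $q_{\rm ref}$. Threading these estimates through so that the final bound takes the compact form advertised is where the work lies.
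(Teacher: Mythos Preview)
Your approach is essentially the same as the paper's: choose a cutoff $\ell_*$ (your $N_0$) so that the tail contributes at most $\delta/2$ in $X_s$, discretize the finitely many surviving entries, and count. The cutoff estimate and the final regrouping into \eqref{eq:net-size} proceed just as you outline.

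Two small points are worth flagging. First, there is a slip in your per-entry cover count: the weighted entry has modulus $\le C\Phi(1+\|q_{\rm ref}\|_{L^\infty}+\kappa^2)$, so the $(\delta/2)$-cover needs $\lesssim\bigl(1+\Phi/\delta+(\|q_{\rm ref}\|_{L^\infty}+\kappa^2)\Phi/\delta\bigr)^2$ points, not $\lesssim\bigl(1+(\|q_{\rm ref}\|_{L^\infty}+\kappa^2)\Phi/\delta\bigr)^2$; the missing $\Phi/\delta$ is harmless for \eqref{eq:net-size} once you use $\log(1+a+b)\le\log(1+a)+b$, but it should not be dropped. Second, the paper handles this bookkeeping slightly differently: instead of discretizing $\Gamma_{mjnk}$ directly, it discretizes $M_{mjnk}^{(1)}$ and $M_{mjnk}^{(2)}$ separately (building $Y=Y_1+Y_2$), which cleanly separates the $\log(1+\Phi/\delta)$ contribution (from $M^{(1)}$, whose unweighted bound is uniform in $\ell$) from the linear $(\|q_{\rm ref}\|_{L^\infty}+\kappa^2)\Phi/\delta$ contribution (from $M^{(2)}$). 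This split is not necessary, but it streamlines exactly the final-step bookkeeping you identify as the main obstacle.
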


\begin{rem}\label{fk}
In view of Remark~{\rm \ref{rem:observation1}}, here we exclude the case $s = \frac{d+2}{2}$ (i.e. $s > \frac{d+2}{2}$ ), to ensure that the quantity $1/\tau$ in \eqref{eq:net-size} is well-defined.
\end{rem}

\begin{proof}
\textbf{Step 1: Initialization.} Suggested by \eqref{eq:est-M1-1} and
\eqref{eq:est-M2-1}, let $\ell_{1}$ and $\ell_{2}$ be the solution to 
\[
(1+\ell_{1})^{-\tau}r_{0}^{\ell_{1}}=\frac{\delta}{8\sqrt{2}C_{d}''\Phi}\quad\text{and}\quad(1+\ell_{2})^{-\tau}(\|q_{{\rm ref}}\|_{L^{\infty}(B_{1})}+\kappa^{2})=\frac{\delta}{8\sqrt{2}C_{d}''\Phi},
\]
respectively. This is valid because $0<\delta<\Phi$ and $C_{d}''>1$. Let $\ell_{*}$ be the smallest nonnegative integer such that 
\begin{equation}
(1+\ell)^{-\tau}\bigg(r_{0}^{\ell}+\|q_{{\rm ref}}\|_{L^{\infty}(B_{1})}+\kappa^{2}\bigg)\le\frac{\delta}{4\sqrt{2}C_{d}''\Phi}\quad\text{for all }\;\;\ell\ge\ell_{*}.\label{eq:ellstar-def}
\end{equation}
Observe that 
\begin{align*}
 & (1+(\ell_{1}+\ell_{2}))^{-\tau}\bigg(r_{0}^{\ell_{1}+\ell_{2}}+\|q_{{\rm ref}}\|_{L^{\infty}(B_{1})}+\kappa^{2}\bigg)\\
 & =r_{0}^{\ell_{2}}(1+(\ell_{1}+\ell_{2}))^{-\tau}r_{0}^{\ell_{1}}+(1+(\ell_{1}+\ell_{2}))^{-\tau}\bigg(\|q_{{\rm ref}}\|_{L^{\infty}(B_{1})}+\kappa^{2}\bigg)\\
 & \le(1+\ell_{1})^{-\tau}r_{0}^{\ell_{1}}+(1+\ell_{2})^{-\tau}(\|q_{{\rm ref}}\|_{L^{\infty}(B_{1})}+\kappa^{2})\le\frac{\delta}{4\sqrt{2}C_{d}''\Phi},
\end{align*}
that is, $\ell_{1}+\ell_{2}$ satisfies \eqref{eq:ellstar-def}. Since
$\ell_{*}$ is the smallest nonnegative integer such that \eqref{eq:ellstar-def}
holds, then
\begin{equation}
\ell_{*}\le\ell_{1}+\ell_{2}.\label{eq:ell-star-1-2}
\end{equation}

Note that $\tau\log(1+\ell_{1})\ge0$. Thus, we have 
\[
\ell_{1}\log r_{0}\ge-\tau\log(1+\ell_{1})+\ell_{1}\log r_{0}=\log\bigg((1+\ell_{1})^{-\tau}r_{0}^{\ell_{1}}\bigg)=\log\bigg(\frac{\delta}{8\sqrt{2}C_{d}''\Phi}\bigg).
\]
Since $0<r_{0}<1$, i.e., $\log r_{0}<0$, it yields
\begin{equation}
\ell_{1}\le\frac{1}{\log r_{0}}\log\bigg(\frac{\delta}{8\sqrt{2}C_{d}''\Phi}\bigg)=\frac{1}{-\log r_{0}}\log\bigg(\frac{8\sqrt{2}C_{d}''\Phi}{\delta}\bigg).\label{eq:ell1}
\end{equation}
We also have 
\begin{equation}
\ell_{2}=\bigg(\frac{\delta}{8\sqrt{2}C_{d}''\Phi(\|q_{{\rm ref}}\|_{L^{\infty}(B_{1})}+\kappa^{2})}\bigg)^{-\frac{1}{\tau}}-1=(8\sqrt{2}C_{d}'')^{\frac{1}{\tau}}\bigg((\|q_{{\rm ref}}\|_{L^{\infty}(B_{1})}+\kappa^{2})\frac{\Phi}{\delta}\bigg)^{\frac{1}{\tau}}-1.\label{eq:ell2}
\end{equation}
Combining \eqref{eq:ell-star-1-2}, \eqref{eq:ell1}, and \eqref{eq:ell2}, leads to 
\begin{align}
\ell_{*}+1 & \le\frac{1}{-\log r_{0}}\log\bigg(\frac{8\sqrt{2}C_{d}''\Phi}{\delta}\bigg)+(8\sqrt{2}C_{d}'')^{\frac{1}{\tau}}\bigg((\|q_{{\rm ref}}\|_{L^{\infty}(B_{1})}+\kappa^{2})\frac{\Phi}{\delta}\bigg)^{\frac{1}{\tau}}\nonumber \\
 & =\frac{1}{-\log r_{0}}\bigg(\log(8\sqrt{2}C_{d}'')+\log\bigg(\frac{\Phi}{\delta}\bigg)\bigg)+(8\sqrt{2}C_{d}'')^{\frac{1}{\tau}}\bigg((\|q_{{\rm ref}}\|_{L^{\infty}(B_{1})}+\kappa^{2})\frac{\Phi}{\delta}\bigg)^{\frac{1}{\tau}}\nonumber \\
 & \le C_{1}\bigg(1+\log\bigg(1+\frac{\Phi}{\delta}\bigg)+\bigg((\|q_{{\rm ref}}\|_{L^{\infty}(B_{1})}+\kappa^{2})\frac{\Phi}{\delta}\bigg)^{\frac{1}{\tau}}\bigg).\label{eq:ellstar-est}
\end{align}
for some positive constant $C_{1}=C_{1}(d,r_{0},s)$. 

\textbf{Step 2: Construction of sets.} For each 4-tuple $(m,j,n,k)$
with $\ell:=\max\{m,n\}\le\ell_{*}$, \eqref{eq:est-M1} and \eqref{eq:est-M2}
imply 
\begin{align*}
|M_{mjnk}^{(1)}(q)| & \le C_{d}''C_{2}\Phi(R,\lambda,\kappa),\\
|M_{mjnk}^{(2)}(q_{{\rm ref}})| & \le C_{d}''\Phi(R,\lambda,\kappa)(\|q_{{\rm ref}}\|_{L^{\infty}(B_{1})}+\kappa^{2})(1+\ell_{*}),
\end{align*}
where $C_{2}=C_{2}(d,r_{0})={\displaystyle \sup_{\ell}}(1+\ell)r_{0}^{\ell}<\infty$.
Let $\delta'=\frac{\delta}{8\sqrt{2}}$ and define 
\begin{align*}
Y_{1}' & :=\begin{Bmatrix}\begin{array}{l|l}
a=a_{1}+ia_{2}\in \frac{\delta'}{\sqrt{2}}\mathbb{Z} + i \frac{\delta'}{\sqrt{2}}\mathbb{Z} & |a_{1}|,|a_{2}|\le C_{d}''C_{2}\Phi(R,\lambda,\kappa)\end{array}\end{Bmatrix},\\
Y_{2}' & :=\begin{Bmatrix}\begin{array}{l|l}
a=a_{1}+ia_{2}\in \frac{\delta'}{\sqrt{2}}\mathbb{Z} + i \frac{\delta'}{\sqrt{2}}\mathbb{Z} & |a_{1}|,|a_{2}|\le C_{d}''\Phi(R,\lambda,\kappa)(\|q_{{\rm ref}}\|_{L^{\infty}(B_{1})}+\kappa^{2})(1+\ell_{*})\end{array}\end{Bmatrix},
\end{align*}
where $\mathbb{Z}$ denotes the set of integers. Then, 
\begin{subequations}
\begin{align}
|Y_{1}'| &  = \bigg( 1+2\bigg\lfloor\frac{C_{d}''C_{2}\Phi}{\delta'/\sqrt{2}}\bigg\rfloor \bigg)^{2} \le \bigg( 1+\frac{32 C_{d}''C_{2}\Phi}{\delta} \bigg)^{2},\label{eq:Y1prime-card}\\
|Y_{2}'| & \le \bigg( 1+\frac{32 C_{d}''\Phi(\|q_{{\rm ref}}\|_{L^{\infty}(B_{1})}+\kappa^{2})(1+\ell_{*})}{\delta} \bigg)^{2}.\label{eq:Y2prime-card}
\end{align}
\end{subequations}
Define 
\begin{align*}
Y_{1} & :=\begin{Bmatrix}\begin{array}{l|l}
(b_{mjnk}) & \begin{array}{l}
b_{mjnk}\in Y'_{1},\;\text{ if }\max\{m,n\}\le\ell_{*};\\
b_{mjnk}=0, \;\text{ otherwise}
\end{array}\end{array}\end{Bmatrix},\\
Y_{2} & :=\begin{Bmatrix}\begin{array}{l|l}
(c_{mjnk}) & \begin{array}{l}
c_{mjnk}\in Y_{2}',\;\text{ if }\max\{m,n\}\le\ell_{*};\\
c_{mjnk}=0,\;\text{ otherwise}
\end{array}\end{array}\end{Bmatrix},
\end{align*}
and let $Y:=Y_{1}+Y_{2}$. 

\textbf{Step 3: Verifying $Y$ is a $\delta$-net of $((\Gamma_{mjnk}^{q_{{\rm ref}}}(B_{+,R}^{\infty,r_{0}})),\|\bullet\|_{X_{s}})$.}
Our goal is to construct 
\[
\begin{cases}
(b_{mjnk})\in Y_{1} & \text{which is an approximation of }(M_{mjnk}^{(1)}(q)),\\
(c_{mjnk})\in Y_{2} & \text{which is an approximation of }(M_{mjnk}^{(2)}(q_{{\rm ref}})).
\end{cases}
\]
\begin{itemize}
\item If $\max\{m,n\}\le\ell_{*}$, we take 
\[
\begin{cases}
b_{mjnk}'\in Y_{1}' & \text{the closest element to }M_{mjnk}^{(1)}(q),\\
c_{mjnk}'\in Y_{2}' & \text{the closest element to }M_{mjnk}^{(2)}(q_{{\rm ref}}).
\end{cases}
\]
Then 
\[
|b_{mjnk}'-M_{mjnk}^{(1)}(q)|\le\delta'\quad\text{and}\quad|c_{mjnk}'-M_{mjnk}^{(2)}(q_{{\rm ref}})|\le\delta'.
\]
By $s>\frac{d}{2}$, we then have 
\begin{align*}
 & 4\sqrt{2}(1+\max\{m,n\})^{\frac{d}{2}-s}(|b_{mjnk}'-M_{mjnk}^{(1)}(q)|+|c_{mjnk}'-M_{mjnk}^{(2)}(q_{{\rm ref}})|)\\
 & \le8\sqrt{2}\delta'=\delta.
\end{align*}
\item If $\ell=\max\{m,n\}>\ell_{*}$, we simply choose $b_{mjnk}'=c_{mjnk}'=0$.
Hence, we know that 
\begin{align*}
 & 4\sqrt{2}(1+\max\{m,n\})^{\frac{d}{2}-s}(|b_{mjnk}'-M_{mjnk}^{(1)}(q)|+|c_{mjnk}'-M_{mjnk}^{(2)}(q_{{\rm ref}})|)\\
 & =4\sqrt{2}(1+\ell)^{\frac{d}{2}-s}(|M_{mjnk}^{(1)}(q)|+|M_{mjnk}^{(2)}(q_{{\rm ref}})|)\\
 & \le4\sqrt{2}C_{d}''\Phi(1+\ell)^{-\tau}\bigg(r_{0}^{\ell}+\|q_{{\rm ref}}\|_{L^{\infty}(B_{1})}+\kappa^{2}\bigg)\quad(\text{by}\;\eqref{eq:est-M1-1}, \eqref{eq:est-M2-1})\\
 & \le\delta\quad(\text{using}\;\eqref{eq:ellstar-def}).
\end{align*}
\end{itemize}
Combining these two cases, we conclude that $Y$ is a $\delta$-net
of $((\Gamma_{mjnk}^{q_{{\rm ref}}}(B_{+,R}^{\infty,r_{0}})),\|\bullet\|_{X_{s}})$

\textbf{Step 4: Calculating the cardinality of $Y$.} Let $n_{\ell}$ be
the number of 4-tuples $(m,j,n,k)$ be such that $\max\{m,n\}=\ell$,
and let $n_{*}:=\sum_{\ell=0}^{\ell_{*}}n_{\ell}$. It is not hard to check that 
\[
|Y|=|Y_{1}||Y_{2}|=(|Y_{1}'||Y_{2}'|)^{n_{*}}.
\]
Also, by \eqref{eq:spherical-index}, we have 
\[
n_{*}\le8(1+\ell_{*})^{2d-2}.
\]
Therefore, from \eqref{eq:Y1prime-card}
and \eqref{eq:Y2prime-card}, it follows 
\begin{align*}
\log|Y| & \le8(1+\ell_{*})^{2d-2}\bigg[ 2\log\bigg(1+\frac{32C_{d}''C_{2}\Phi}{\delta}\bigg)\\
 & \quad\qquad+2\log\bigg(1+\frac{32C_{d}''\Phi(\|q_{{\rm ref}}\|_{L^{\infty}(B_{1})}+\kappa^{2})(1+\ell_{*})}{\delta}\bigg)\bigg].
\end{align*} 
Since $\log(1+t)\le t$ for all $t\ge0$, we obtain
\begin{align}
\log|Y| & \le 16(1+\ell_{*})^{2d-2}\bigg[\log\bigg(1+\frac{32C_{d}''C_{2}\Phi}{\delta}\bigg)+\frac{32C_{d}''\Phi(\|q_{{\rm ref}}\|_{L^{\infty}(B_{1})}+\kappa^{2})(1+\ell_{*})}{\delta}\bigg]\nonumber \\
 & \le 16(1+\ell_{*})^{2d-1}\bigg[\log\bigg(1+\frac{32C_{d}''C_{2}\Phi}{\delta}\bigg)+\frac{32C_{d}''\Phi(\|q_{{\rm ref}}\|_{L^{\infty}(B_{1})}+\kappa^{2})}{\delta}\bigg]\nonumber \\
 & \le\eta'(1+\ell_{*})^{2d-1}\bigg[1+\log\bigg(1+\frac{\Phi}{\delta}\bigg)+(\|q_{{\rm ref}}\|_{L^{\infty}(B_{1})}+\kappa^{2})\frac{\Phi}{\delta}\bigg]\label{eq:Y-card1}
\end{align}
for some constant $\eta'=\eta'(d,s,r_{0})$. Combining \eqref{eq:Y-card1}
with \eqref{eq:ellstar-est} yields 
\begin{align*}
\log|Y| & \le\eta'\bigg[C_{1}\bigg(1+\log\bigg(1+\frac{\Phi}{\delta}\bigg)+\bigg((\|q_{{\rm ref}}\|_{L^{\infty}(B_{1})}+\kappa^{2})\frac{\Phi}{\delta}\bigg)^{\frac{1}{\tau}}\bigg)\bigg]^{2d-1}\times\\
 & \quad\qquad\times\bigg[1+\log\bigg(1+\frac{\Phi}{\delta}\bigg)+(\|q_{{\rm ref}}\|_{L^{\infty}(B_{1})}+\kappa^{2})\frac{\Phi}{\delta}\bigg],
\end{align*}
which implies \eqref{eq:net-size}. 
\end{proof}

\section{Proofs of the main results}\label{sec4}

In this section, we would like to present detailed proofs of Theorem~\ref{thm:main} and Theorem~\ref{thm:second-main}. Before proving our results, we first give some observations. Taking $q_{{\rm ref}}=i$
(hence $\lambda=1$) and $s=\frac{d+4}{2}$ (i.e. $\tau=1$), see also Remark~{\rm \ref{rem:lambda-choice}}, we
see that \eqref{eq:net-size} becomes 
\begin{equation}
	\log|Y|\le\eta\bigg[1+\log\bigg(1+\frac{\Phi}{\delta}\bigg)+2(1+\kappa^{2})\frac{\Phi}{\delta}\bigg]^{2d},\label{eq:net-size-q-tau}
\end{equation}
where 
\begin{equation}
	\Phi(R,\kappa)=(R+1)(3+R+\kappa^{2}).\label{eq:phi-special}
\end{equation}
Note that 
\[
\inf_{0<\delta<\Phi}\frac{1}{(1+\kappa^{2})}\bigg[\log\bigg(1+\frac{\Phi}{\delta}\bigg)+2(1+\kappa^{2})\frac{\Phi}{\delta}\bigg]=\frac{\log2}{1+\kappa^{2}}+2<2+\log2
\]
for all $\kappa^{2}>0$. Therefore, given any $\theta$ satisfies
\begin{equation}
	2+\log2<\theta^{-\frac{1}{2\alpha}}<\infty\quad\bigg(\text{equivalently,}\quad\theta\in(0,(2+\log2)^{-2\alpha})\bigg),\label{eq:pert-observ}
\end{equation}
there exists a unique $\delta\in(0,\Phi)$ such that 

\begin{equation}
	\theta^{-\frac{1}{2\alpha}}=\frac{1}{1+\kappa^{2}}\bigg[\log\bigg(1+\frac{\Phi}{\delta}\bigg)+2(1+\kappa^{2})\frac{\Phi}{\delta}\bigg],\label{eq:equation-theta-delta}
\end{equation}
and \eqref{eq:net-size-q-tau} is reduced to 
\begin{equation}
	|Y|\le\exp\bigg[\eta\bigg(1+(1+\kappa^{2})\theta^{-\frac{1}{2\alpha}}\bigg)^{2d}\bigg]\label{eq:net-size-q-tau1}
\end{equation}

In view of these observations, we are ready to prove Theorem~\ref{thm:main}, the case of higher frequency. 
\begin{proof}
	[Proof of Theorem~{\rm \ref{thm:main}}] By Proposition~\ref{prop:Kolmogorov},
	there exists a constant $\mu>0$ such that the following statement
	holds for all $\beta>0$ and for all $\theta\in(0,\mu\beta)$: 
	\[
	\text{there exists a }\theta\text{-discrete subset }Z\text{ of }(\mathcal{N}_{\alpha\beta}^{\theta}(B_{r_{0}}),\|\bullet\|_{L^{\infty}})
	\]
	with 
	\[
	|Z|\ge\exp\bigg[2^{-(d+1)}\bigg(\frac{\mu\beta}{\theta}\bigg)^{\frac{d}{\alpha}}\bigg].
	\]
	It is suffices to restrict $0<\theta<\min\{(2+\log2)^{-2\alpha},R,\mu\beta\}$. Therefore, we can choose $0<\delta<\Phi$ such that \eqref{eq:equation-theta-delta} is satisfied.
	
	Let $0<r_{0}<1$. Since $0<\theta<R$, we see that $Z\subset\mathcal{N}_{\alpha\beta}^{\theta}(B_{r_{0}})\subset B_{+,R}^{\infty,r_{0}}$.
	Recall that we have chosen $q_{\rm ref}=i$. By Proposition~\ref{prop:net-const}, there exists a $\delta$-net
	$Y$ of $((\Gamma_{mjnk}^{i}(B_{+,R}^{\infty,r_{0}})),\|\bullet\|_{X_{s}})$
	with $s=\frac{d+4}{2}$ such that \eqref{eq:net-size-q-tau1} holds.
	Of course, $Y$ is also a $\delta$-net of $((\Gamma_{mjnk}^{i}(Z)),\|\bullet\|_{X_{s}})$. 
	
	We now choose $\beta>R$ sufficiently large (depends on $d,\mu,\theta,\kappa$) satisfying
	\[
	|Z|\ge\exp\bigg[2^{-(d+1)}\bigg(\frac{\mu\beta}{\theta}\bigg)^{\frac{d}{\alpha}}\bigg]>\exp\bigg[\eta\bigg(1+(1+\kappa^{2})\theta^{-\frac{1}{2\alpha}}\bigg)^{2d}\bigg]\ge|Y|.
	\]
	Therefore, we can choose
	two different $\tilde{q}_{1},\tilde{q}_{2}\in Z\subset\mathcal{N}_{\alpha\beta}^{\theta}(B_{r_{0}})\subset L^{\infty}(B_{1})$
	such that there exists $y\in Y$ such that 
	\begin{align*}
		\|(\Gamma^i_{mjnk}(\tilde{q}_{1})-y_{mjnk})\|_{X_{s}} & \le\delta,\\
		\|(\Gamma^i_{mjnk}(\tilde{q}_{2})-y_{mjnk})\|_{X_{s}} & \le\delta.
	\end{align*}
	Combining with Proposition~\ref{prop:matrix-repn},
	we conclude that 
	\[
	\|\Lambda_{q_{1}}-\Lambda_{q_{2}}\|_{\frac{d+4}{2}\rightarrow-\frac{d+4}{2}}\le8\sqrt{2}\delta,
	\]
	where $q_{1}=\tilde{q}_{1}+i$ and $q_{2}=\tilde{q}_{2}+i$. Now we
	estimate $\delta$ by $\theta$. 
	\begin{itemize}
		\item \textbf{Case 1.} If $(1+\kappa^{2})\frac{\Phi}{\delta}\le\log(1+\frac{\Phi}{\delta})$,
		then \eqref{eq:equation-theta-delta} implies 
		\[
		\theta^{-\frac{1}{2\alpha}}\le\frac{3}{1+\kappa^{2}}\log\bigg(1+\frac{\Phi}{\delta}\bigg),
		\]
		that is, 
		\[
		\delta\le(R+1)(3+R+\kappa^{2})\exp\bigg(-\frac{1+\kappa^{2}}{3}\theta^{-\frac{1}{2\alpha}}\bigg).
		\]
		\item \textbf{Case 2.} If $(1+\kappa^{2})\frac{\Phi}{\delta}\ge\log(1+\frac{\Phi}{\delta})$,
		the \eqref{eq:equation-theta-delta} implies 
		\[
		\theta^{-\frac{1}{2\alpha}}\le\frac{1}{1+\kappa^{2}}\bigg[3(1+\kappa^{2})\frac{\Phi}{\delta}\bigg]=\frac{3\Phi}{\delta}=3(R+1)(3+R+\kappa^{2})\delta^{-1},
		\]
		that is, 
		\[
		\delta\le3(R+1)(3+R+\kappa^{2})\theta^{\frac{1}{2\alpha}}.
		\]
	\end{itemize}
	Combining these two cases yields
	\[
	\delta\le(R+1)(3+R+\kappa^{2})\bigg[\exp\bigg(-\frac{1+\kappa^{2}}{3}\theta^{-\frac{1}{2\alpha}}\bigg)+3\theta^{\frac{1}{2\alpha}}\bigg],
	\]
	which gives \eqref{eq:DN-map-approx}. Since $Z$ is $\theta$-discrete
	subset with respect to the norm $\|\bullet\|_{L^{\infty}(B_{1})}$,
	 \eqref{eq:discrete-q1-q2} follows immediately, which is our desired result. 
\end{proof}

Now, we want to prove Theorem~\ref{thm:second-main} in which the frequency is small. First of all, we improve Proposition~\ref{prop:ellip2} without the
restriction \eqref{imaginary}. 
\begin{prop}
	Let $\kappa^{2}>0$, $q\in L^{\infty}(B_{1})$, $\phi\in H^{\frac{3}{2}}(B_{1})$,
	and $u\in H^{1}(B_{1})$ be the solution to \eqref{eq:sch-reg}. If
	\begin{equation}
		\|q\|_{L^{\infty}(B_{1})}\le\frac{1}{4}\kappa_{1}\quad\text{and}\quad\kappa^{2}\le\frac{1}{4}\kappa_{1},\label{eq:smallness-assump}
	\end{equation}
	where $\kappa_{1}$ is the first Dirichlet eigenvalue of $-\Delta$
	on $B_{1}$ (which depends only on dimension $d$), then
	\begin{equation}
		\|u\|_{L^{2}(B_{1})}\le C_d\|\phi\|_{H^{\frac{3}{2}}(\mathcal{S}^{d-1})}\label{eq:reg-improvement}
	\end{equation}
	for some constant $C_d$ depending only on dimension $d$. 
\end{prop}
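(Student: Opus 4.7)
The plan is to mimic the proof of Proposition~\ref{prop:ellip2}, but replace the role of the coercivity coming from $\Im(q)$ by coercivity coming from the Poincar\'{e} inequality together with the smallness assumption \eqref{eq:smallness-assump}. First I would extend $\phi$ to $\tilde\phi\in H^{2}(B_{1})$ satisfying $\|\tilde\phi\|_{H^{2}(B_{1})}\le C_{d}\|\phi\|_{H^{3/2}(\mathcal{S}^{d-1})}$, and set $v:=u-\tilde\phi\in H_{0}^{1}(B_{1})$, which solves
\[
[\Delta+q(x)+\kappa^{2}]v=f\quad\text{in}\;B_{1},\qquad f:=-[\Delta+q(x)+\kappa^{2}]\tilde\phi\in L^{2}(B_{1}).
\]
Exactly as in the earlier proof, $\|f\|_{L^{2}(B_{1})}\le(1+\|q\|_{L^{\infty}(B_{1})}+\kappa^{2})\|\tilde\phi\|_{H^{2}(B_{1})}\le C_{d}\|\phi\|_{H^{3/2}(\mathcal{S}^{d-1})}$, using \eqref{eq:smallness-assump} to absorb the coefficients.

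Next, the key step is to estimate $\|v\|_{L^{2}(B_{1})}$ without invoking Lemma~\ref{lem:elliptic-est} (which required the imaginary part condition). Test the equation against $\bar v$, then take the real part of \eqref{eq:sch-test}:
\[
\int_{B_{1}}|\nabla v|^{2}\,dx=\int_{B_{1}}\Re(q(x))|v|^{2}\,dx+\kappa^{2}\int_{B_{1}}|v|^{2}\,dx-\Re\int_{B_{1}}f\bar v\,dx.
\]
Since $v\in H_{0}^{1}(B_{1})$, the Poincar\'{e} inequality gives $\kappa_{1}\|v\|_{L^{2}(B_{1})}^{2}\le\|\nabla v\|_{L^{2}(B_{1})}^{2}$ (this is the definition of $\kappa_{1}$). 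Using $|\Re(q)|\le\|q\|_{L^{\infty}}\le\kappa_{1}/4$ and $\kappa^{2}\le\kappa_{1}/4$, the right-hand side is bounded by $\tfrac{\kappa_{1}}{2}\|v\|_{L^{2}(B_{1})}^{2}+\|f\|_{L^{2}(B_{1})}\|v\|_{L^{2}(B_{1})}$, so
\[
\kappa_{1}\|v\|_{L^{2}(B_{1})}^{2}\le\tfrac{\kappa_{1}}{2}\|v\|_{L^{2}(B_{1})}^{2}+\|f\|_{L^{2}(B_{1})}\|v\|_{L^{2}(B_{1})}.
\]
Absorbing the first term on the right yields $\|v\|_{L^{2}(B_{1})}\le\tfrac{2}{\kappa_{1}}\|f\|_{L^{2}(B_{1})}$.

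Finally, combining the bound on $\|f\|_{L^{2}(B_{1})}$ with the triangle inequality $\|u\|_{L^{2}(B_{1})}\le\|v\|_{L^{2}(B_{1})}+\|\tilde\phi\|_{L^{2}(B_{1})}$ and the trace-extension estimate gives \eqref{eq:reg-improvement}, with a constant $C_{d}$ depending only on dimension (through $\kappa_{1}$ and the trace constant). I do not expect a serious obstacle here: the only delicate point is ensuring that one does not need the $\Im(q)$ hypothesis, which is precisely what the Poincar\'{e} gap together with the smallness condition \eqref{eq:smallness-assump} is designed to provide; the factor $1/2$ in the $\kappa_{1}/4$ thresholds is chosen exactly so that $\|q\|_{L^{\infty}}+\kappa^{2}\le\kappa_{1}/2$, leaving a strictly positive coercive term $\tfrac{\kappa_{1}}{2}\|v\|_{L^{2}(B_{1})}^{2}$.
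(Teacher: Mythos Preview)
Your proposal is correct and essentially identical to the paper's proof: both extend $\phi$ to $\tilde\phi$, set $v=u-\tilde\phi$, test the equation for $v$ against $\bar v$, use Poincar\'e to get $\kappa_{1}\|v\|_{L^{2}}^{2}\le\|\nabla v\|_{L^{2}}^{2}$, absorb the $(\|q\|_{L^{\infty}}+\kappa^{2})\|v\|_{L^{2}}^{2}\le\tfrac{\kappa_{1}}{2}\|v\|_{L^{2}}^{2}$ term, and conclude $\|v\|_{L^{2}}\le\tfrac{2}{\kappa_{1}}\|f\|_{L^{2}}$. The only cosmetic difference is that the paper handles the cross term $\int f\bar v$ via Young's inequality with a parameter $\epsilon=\kappa_{1}/4$ rather than directly by Cauchy--Schwarz as you do; the resulting bound is the same.
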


\begin{proof}
	Let $\tilde{\phi}\in H^{2}(B_{1})$ as in \eqref{eq:trace} and
	consider $v=u-\tilde{\phi}\in H_{0}^{1}(B_{1})$. In this case, we
	write \eqref{eq:elliptic-regularity1} as 
	\begin{equation}
	\Delta v=f-q(x)v+\kappa^{2}v,\label{eq:elliptic-regularity1-modify}
	\end{equation}
	where $f=-(\Delta+q(x)+\kappa^{2})\tilde\phi$. Multiplying \eqref{eq:elliptic-regularity1-modify} by $-\overline{v}$
	and using the integration by parts, we have 
	\begin{align*}
		\int_{B_{1}}|\nabla v|^{2}\,dx & =\int_{B_{1}}f\overline{v}\,dx+\int_{B_{1}}q(x)|v|^{2}\,dx+\kappa^{2}\int_{B_{1}}|v|^{2}\,dx\\
		& \le\frac{1}{4\epsilon}\|f\|_{L^{2}(B_{1})}^{2}+\bigg(\epsilon+\|q\|_{L^{\infty}(B_{1})}+\kappa^{2}\bigg)\|v\|_{L^{2}(B_1)}^{2}
	\end{align*}
	for $\epsilon>0$. Poincar\'{e}'s inequality implies that
	we have $\kappa_{1}\|v\|_{L^{2}(B_{1})}^{2}\le\|\nabla v\|_{L^{2}(B_{1})}^{2}$ for all $v\in H_0^1(B_1)$.
	From \eqref{eq:smallness-assump}, it follows that
	\begin{align*}
		\kappa_{1}\|v\|_{L^{2}(B_{1})}^{2} & \le\frac{1}{4\epsilon}\|f\|_{L^{2}(B_{1})}^{2}+\bigg(\epsilon+\|q\|_{L^{\infty}(B_{1})}+\kappa^{2}\bigg)\|v\|_{L^{2}(B_1)}^{2}\\
		& \le\frac{1}{4\epsilon}\|f\|_{L^{2}(B_{1})}^{2}+\bigg(\epsilon+\frac{1}{2}\kappa_{1}\bigg)\|v\|_{L^{2}(B_1)}^{2}.
	\end{align*}
	Choosing $\epsilon=\frac{1}{4}\kappa_{1}$, we have 
	\[
	\|v\|_{L^{2}(B_{1})}^{2}\le\frac{4}{\kappa_{1}^{2}}\|f\|_{L^{2}(B_{1})}^{2}.
	\]
	Using \eqref{eq:smallness-assump} again gives
	\begin{align*}
		\|v\|_{L^{2}(B_{1})} & \le 2\kappa_{1}^{-1} \|f\|_{L^{2}(B_{1})}\le 2\kappa_{1}^{-1}(1+\|q\|_{L^{\infty}(B_{1})}+\kappa^{2})\|\tilde{\phi}\|_{H^{2}(B_{1})}\\
		& \le2\kappa_{1}^{-1}\bigg(1+\frac{1}{2}\kappa_{1}\bigg)\|\tilde{\phi}\|_{H^{2}(B_{1})}.
	\end{align*}
	Combining $\|u\|_{L^{2}(B_{1})}\le\|v\|_{L^{2}(B_{1})}+\|\tilde{\phi}\|_{L^{2}(B_{1})}$ and \eqref{eq:trace} implies \eqref{eq:reg-improvement}. 
\end{proof}

Let $u_{mj}\in H^{1}(B_{1})$, $\mathring{u}_{mj}\in H^{1}(B_{1})$,
$v_{mj}\in H^{1}(B_{1})$, $\mathring{v}_{mj}\in H^{1}(B_{1})$ as
in Subsection~\ref{sec:Matrix-repn}. In this case, by using \eqref{eq:reg-improvement}
rather than \eqref{eq:elliptic-est}, we can improve \eqref{eq:umj-est1a},
\eqref{eq:umj-est1b}, \eqref{eq:vmj-est1c}, and \eqref{eq:vmj-est1d}
as follows: 
\begin{subequations}
	\begin{align}
		\|u_{mj}\|_{L^{2}(B_{1})} & \le C_{d}'(1+m)^{\frac{3}{2}},\label{eq:umj-est2a}\\
		\|\mathring{u}_{mj}\|_{L^{2}(B_{1})} & \le C_{d}'(1+m)^{\frac{3}{2}},\label{eq:umj-est2b}\\
		\|v_{mj}\|_{L^{2}(B_{1})} & \le C_{d}'(1+m)^{\frac{3}{2}},\label{eq:vmj-est2c}\\
		\|\mathring{v}_{mj}\|_{L^{2}(B_{1})} & \le C_{d}'(1+m)^{\frac{3}{2}},\label{eq:vmj-est2d}
	\end{align}
\end{subequations}
for some constant $C_{d}'$. 

Now, choosing $q_{{\rm ref}}\equiv0$ in Lemma~\ref{lem:matrix-repn}, we can prove the following lemma. 
\begin{lem}
	For each $q\in B_{+,R}^{\infty,r_{0}}$, we have 
	\begin{align*}
		\langle\Gamma(q;0)Y_{mj},Y_{nk}\rangle & =I_{mjnk}^{(1)}(q)+I_{mjnk}^{(2)}(0),\\
		& =L_{mjnk}^{(1)}(q)+L_{mjnk}^{(2)}(0),
	\end{align*}
	where 
	\begin{align*}
		I_{mjnk}^{(1)}(q) & =-\int_{B_{r_{0}}}q(x)u_{mj}\overline{\tilde{Y}_{nk}}\,dx,\\
		I_{mjnk}^{(2)}(0) & =-\kappa^{2}\int_{B_{r_{0}}}(u_{mj}-\mathring{u}_{mj})\overline{\tilde{Y}_{nk}}\,dx,\\
		L_{mjnk}^{(1)}(q) & =-\int_{B_{r_{0}}}q(x)\overline{v_{nk}}\tilde{Y}_{mj}\,dx,\\
		L_{mjnk}^{(2)}(0) & =-\kappa^{2}\int_{B_{r_{0}}}\overline{(v_{nk}-\mathring{v}_{nk})}\tilde{Y}_{mj}\,dx.
	\end{align*}
\end{lem}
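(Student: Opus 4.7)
The plan is to repeat the integration-by-parts derivation of Lemma~\ref{lem:matrix-repn} essentially verbatim, but now with the reference potential taken to be $q_{\rm ref}\equiv 0$. The only conceptual wrinkle is that Lemma~\ref{lem:matrix-repn} invoked the hypothesis \eqref{imaginary} through Proposition~\ref{prop:ellip2} to guarantee that the four Dirichlet problems defining $u_{mj},\mathring{u}_{mj},v_{mj},\mathring{v}_{mj}$ are well-posed. With $q_{\rm ref}\equiv 0$ that hypothesis fails, so I would first substitute the low-frequency smallness assumption \eqref{eq:smallness-assump}: since $\kappa^{2}+\|q\|_{L^{\infty}(B_{1})}<\kappa_{1}$, neither $-(\Delta+\kappa^{2})$ nor $-(\Delta+q+\kappa^{2})$ has $0$ in its Dirichlet spectrum, so $u_{mj},\mathring{u}_{mj},v_{mj},\mathring{v}_{mj}$ exist uniquely in $H^{1}(B_{1})$, with the quantitative bounds \eqref{eq:umj-est2a}--\eqref{eq:vmj-est2d} supplied by the improved elliptic estimate \eqref{eq:reg-improvement}.

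With well-posedness in hand I would then reproduce the two parallel computations of Lemma~\ref{lem:matrix-repn}. For the first identity, start from $\langle\Gamma(q;0)Y_{mj},Y_{nk}\rangle=\int_{\mathcal{S}^{d-1}}\partial_{r}(u_{mj}-\mathring{u}_{mj})\overline{Y_{nk}}\,dS$, apply Green's identity against the harmonic polynomial $\tilde{Y}_{nk}$ (using $u_{mj}-\mathring{u}_{mj}=0$ on $\mathcal{S}^{d-1}$ together with $\Delta\tilde{Y}_{nk}=0$ to kill the remaining boundary and volume contributions), and then substitute $\Delta(u_{mj}-\mathring{u}_{mj})=-q\,u_{mj}-\kappa^{2}(u_{mj}-\mathring{u}_{mj})$. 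The support hypothesis $\mathrm{supp}(q)\subset\overline{B_{r_{0}}}$ restricts the $q$-integral to $B_{r_{0}}$, producing $I^{(1)}_{mjnk}(q)+I^{(2)}_{mjnk}(0)$; the $(q_{\rm ref}+\kappa^{2})$ prefactor that appeared in Lemma~\ref{lem:matrix-repn} collapses to just $\kappa^{2}$.

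For the $L$-identity I would exploit the fact that with $q_{\rm ref}\equiv 0$ we have $\overline{q_{\rm ref}}=0$, so that $\langle\Gamma(q;0)Y_{mj},Y_{nk}\rangle=\langle Y_{mj},\Gamma(q;0)Y_{nk}\rangle$ by the adjoint relation already exploited in Lemma~\ref{lem:matrix-repn}; then rerun the same Green-identity argument with the roles of $(m,j)$ and $(n,k)$ swapped and with $v_{nk},\mathring{v}_{nk}$ in place of $u_{mj},\mathring{u}_{mj}$. There is no genuine obstacle here beyond bookkeeping: essentially all of the work was carried out in Lemma~\ref{lem:matrix-repn}, and the only new ingredient is replacing the coercivity from the imaginary part of $q_{\rm ref}$ by the Poincar\'e-based coercivity supplied by the low-frequency smallness assumption.
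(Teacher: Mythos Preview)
Your proposal is correct and follows the same route as the paper, which simply specializes Lemma~\ref{lem:matrix-repn} to $q_{\rm ref}\equiv 0$; you are in fact more careful in explicitly invoking \eqref{eq:smallness-assump} to restore well-posedness of the four Dirichlet problems once \eqref{imaginary} is no longer available. Note that your integration-by-parts computation (like the paper's own, via Lemma~\ref{lem:matrix-repn}) naturally yields the domain $B_{1}$ rather than $B_{r_{0}}$ in $I^{(2)}_{mjnk}(0)$ and $L^{(2)}_{mjnk}(0)$ --- the $B_{r_{0}}$ in the displayed statement appears to be a typographical slip, harmless for the subsequent estimates.
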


\begin{rem}
	Since the potential $q$ is real-valued,  $u_{mj}=v_{mj}$ and
	$\mathring{u}_{mj}=\mathring{v}_{mj}$. 
\end{rem}

Let $M_{mjnk}^{(l)}$ ($l=1,2$) be defined in \eqref{eq:M-mjnk}, then we
again obtain \eqref{eq:matrix-repn2}, that is, 
\[
|\langle\Gamma(q;0)Y_{mj},Y_{nk}\rangle|\le|M_{mjnk}^{(1)}(q)|+|M_{mjnk}^{(2)}(0)|.
\]
Following exactly the same arguments in the proof of Lemma~\ref{lem:matrix-repn-est},
we have 
\[
\begin{cases}
	\begin{array}{l}
		|M_{mjnk}^{(1)}(q)|\le C_{d}''(1+\ell)r_{0}^{\ell}\\
		|M_{mjnk}^{(1)}(q)|\le C_{d}''\kappa^{2}(1+\ell)
	\end{array} & \text{for all}\;\;q\in B_{+,\frac{1}{4}\kappa_{1}^{2}}^{\infty,r_{0}}.\end{cases}
\]
Consequently, we can replace the function $\Phi$ in \eqref{eq:est-M1} and \eqref{eq:est-M2} by the constant $1$. The observation significantly improves our estimates later.

Here, we remark that due to \eqref{eq:smallness-assump}, we choose $R=\frac{1}{4}\kappa_{1}$. Repeating the
same arguments as in the proof of Proposition~\ref{prop:net-const}, we can prove 
\begin{prop}
	\label{prop:net-improve}Let $s>\frac{d+2}{2}$ and $0<\kappa^{2}\le\frac{1}{4}\kappa_{1}$.
	Define $\tau=s-\frac{d+2}{2}$. Given any $0<\delta<1$, there exists
	a $\delta$-net $Y$ of $((\Gamma_{mjnk}^{0}(B_{+,\frac{1}{4}\kappa_{1}}^{\infty,r_{0}})),\|\bullet\|_{X_{s}})$
	with
	\begin{equation}
		\log|Y|\le\eta\bigg[1+\log\bigg(1+\frac{1}{\delta}\bigg)+\frac{\kappa^{2}}{\delta}+\bigg(\frac{\kappa^{2}}{\delta}\bigg)^{\frac{1}{\tau}}\bigg]^{2d}\label{eq:net-improve}
	\end{equation}
	for some constant $\eta=\eta(d,s,r_{0})$. 
\end{prop}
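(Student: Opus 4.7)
The plan is to mirror the proof of Proposition~\ref{prop:net-const} step by step, making only those changes forced by the switch to the small-frequency setting. The crucial input is that, under \eqref{eq:smallness-assump}, the improved elliptic bounds \eqref{eq:umj-est2a}--\eqref{eq:vmj-est2d} deliver the matrix-entry estimates
\[
|M_{mjnk}^{(1)}(q)|\le C_d''(1+\ell)r_0^\ell, \qquad |M_{mjnk}^{(2)}(0)|\le C_d''\kappa^2(1+\ell),
\]
with $\ell = \max\{m,n\}$, for every $q\in B_{+,\frac{1}{4}\kappa_1}^{\infty,r_0}$. Compared with Lemma~\ref{lem:matrix-repn-est}, the function $\Phi$ is replaced by the constant $1$, and the quantity $\|q_{\rm ref}\|_{L^\infty(B_1)}+\kappa^2$ is replaced by $\kappa^2$. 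Every line of the previous proof goes through verbatim under this substitution, but one must verify that the resulting constants still depend only on $d,s,r_0$.

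First I would introduce the cutoff $\ell_*$ as the smallest nonnegative integer for which $(1+\ell)^{-\tau}(r_0^\ell+\kappa^2)\le\delta/(4\sqrt{2}C_d'')$ for all $\ell\ge\ell_*$. Choosing $\ell_1$ and $\ell_2$ to solve $(1+\ell_1)^{-\tau}r_0^{\ell_1}=(1+\ell_2)^{-\tau}\kappa^2=\delta/(8\sqrt{2}C_d'')$, the subadditivity trick from Step~1 of Proposition~\ref{prop:net-const} gives $\ell_*\le \ell_1+\ell_2$; solving for $\ell_1$ (using $-\log r_0>0$) and for $\ell_2$ (via the $1/\tau$ exponent) yields
\[
\ell_*+1\le C_1\Bigl[1+\log\bigl(1+\tfrac{1}{\delta}\bigr)+\bigl(\tfrac{\kappa^2}{\delta}\bigr)^{1/\tau}\Bigr],
\]
with $C_1=C_1(d,s,r_0)$. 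The assumption $\delta<1$ here plays exactly the role that $\delta<\Phi$ played before, keeping the logarithm and power nonnegative.

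Next, exactly as in Step~2, I would build two complex-valued grids $Y_1',Y_2'$ of spacing $\delta'/\sqrt{2}$ with $\delta'=\delta/(8\sqrt{2})$, covering the boxes dictated by the two matrix-entry bounds on the range $\max\{m,n\}\le\ell_*$. With $C_2=\sup_\ell(1+\ell)r_0^\ell<\infty$ this produces
\[
|Y_1'|\le\Bigl(1+\tfrac{32C_d''C_2}{\delta}\Bigr)^{2}, \qquad |Y_2'|\le\Bigl(1+\tfrac{32C_d''\kappa^2(1+\ell_*)}{\delta}\Bigr)^{2}.
\]
Zero-padding outside $\max\{m,n\}\le\ell_*$ defines $Y_1,Y_2$, and we set $Y=Y_1+Y_2$. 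The verification that $Y$ is a $\delta$-net of $(\Gamma_{mjnk}^{0}(B_{+,\kappa_1/4}^{\infty,r_0}),\|\bullet\|_{X_s})$ is exactly Step~3 of the earlier proof: for $\max\{m,n\}\le\ell_*$ a closest-point choice gives an $8\sqrt{2}\delta'=\delta$ error after invoking Proposition~\ref{prop:matrix-repn}, while for $\max\{m,n\}>\ell_*$ the choice zero works by the defining inequality of $\ell_*$ combined with the improved bounds.

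Finally, counting via $n_*\le 8(1+\ell_*)^{2d-2}$ and $|Y|=(|Y_1'||Y_2'|)^{n_*}$, then using $\log(1+t)\le t$ to absorb the second logarithm, I would obtain
\[
\log|Y|\le\eta'(1+\ell_*)^{2d-1}\Bigl[1+\log\bigl(1+\tfrac{1}{\delta}\bigr)+\tfrac{\kappa^2}{\delta}\Bigr].
\]
Substituting the bound on $\ell_*+1$ from the first step, and collecting the common factor, yields \eqref{eq:net-improve} with some $\eta=\eta(d,s,r_0)$. I do not expect any genuine obstacle here: the argument is a clean specialization of Proposition~\ref{prop:net-const}. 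The only point that needs care is the bookkeeping, namely checking that the improved $L^2$ bounds introduce no hidden $\kappa$-dependence into the $C_d''$ or $\eta$ constants, so that the final estimate depends on $\kappa$ only through the explicit quotient $\kappa^2/\delta$.
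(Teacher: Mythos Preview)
Your proposal is correct and follows exactly the route the paper takes: the paper's own proof of Proposition~\ref{prop:net-improve} is simply the instruction to repeat the argument of Proposition~\ref{prop:net-const} with $\Phi$ replaced by $1$ and $\|q_{\rm ref}\|_{L^\infty(B_1)}+\kappa^2$ replaced by $\kappa^2$, which is precisely what you outline. One small slip: in Step~3 you invoke Proposition~\ref{prop:matrix-repn}, but that proposition is not used there---the $\delta$-net property is verified purely in the $X_s$ norm, and the factor $8\sqrt{2}$ arises from the choice $\delta'=\delta/(8\sqrt{2})$, not from the operator-norm comparison.
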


As above, we choose $s=\frac{d+4}{2}$, i.e. $\tau=1$, and \eqref{eq:net-improve}
becomes 
\begin{equation}
	\log|Y|\le\eta\bigg[1+\log\bigg(1+\frac{1}{\delta}\bigg)+\frac{2\kappa^{2}}{\delta}\bigg]^{2d}.\label{eq:net-improve-1}
\end{equation}
Observe that 
\[
\inf_{0<\delta<1}\bigg[\log\bigg(1+\frac{1}{\delta}\bigg)+\frac{2\kappa^{2}}{\delta}\bigg]\le\log2+\frac{\kappa_1}{2}
\]
Therefore, given any $\theta$ satisfies
\[
0<\theta<\left(\log2+\frac{\kappa_1}{2}\right)^{-2\alpha},
\]
there exists a unique $\tilde{\delta}\in(0,1)$ such that 
\begin{equation}
	\theta^{-\frac{1}{2\alpha}}=\log\bigg(1+\frac{1}{\tilde{\delta}}\bigg)+\frac{2\kappa^{2}}{\tilde{\delta}}.\label{eq:equation-theta-delta-improve}
\end{equation}
We see that \eqref{eq:equation-theta-delta-improve} is similar to
\eqref{eq:equation-theta-delta}, except that the multiplier $(1+\kappa^{2})^{-1}$
in front of the right-hand-side of \eqref{eq:equation-theta-delta} is removed. In view of \eqref{eq:equation-theta-delta-improve}, \eqref{eq:net-improve-1}
is reduced to 
\begin{equation}
	\log|Y|\le\eta(1+\theta^{-\frac{1}{2\alpha}})^{2d}.\label{eq:net-improve-2}
\end{equation}
Based on these observations, we are ready to prove Theorem~\ref{thm:second-main}. 
\begin{proof}
	[Proof of Theorem~{\rm \ref{thm:second-main}}] By Proposition~\ref{prop:Kolmogorov},
	there exists a constant $\mu>0$ such that for all $\beta>0$ and for all $\theta\in(0,\mu\beta)$: 
	\[
	\text{there exists a }\theta\text{-discrete subset }Z\text{ of }(\mathcal{N}_{\alpha,\beta}^{\theta}(B_{r_{0}}),\|\bullet\|_{L^{\infty}})
	\]
	with 
	\[
	|Z|\ge\exp\bigg[2^{-(d+1)}\bigg(\frac{\mu\beta}{\theta}\bigg)^{\frac{d}{\alpha}}\bigg].
	\]
	It suffices to restrict $0<\theta<\min\{(\frac 12{\kappa_1}+\log2)^{-2\alpha},\frac{1}{4}\kappa_{1},\mu\beta\}$.
	Therefore, we can find $0<\tilde{\delta}<1$ such that \eqref{eq:equation-theta-delta-improve}
	is satisfied. Since $0<\theta<\frac{1}{4}\kappa_{1}$, it is clear that $Z\subset\mathcal{N}_{\alpha\beta}^{\theta}(B_{r_{0}})\subset B_{+,\frac{1}{4}\kappa_{1}}^{\infty,r_{0}}$. Recall that we have chosen $q_{{\rm ref}}=0$. From Proposition~\ref{prop:net-improve},
	there exists a $\tilde{\delta}$-net $Y$ of $((\Gamma_{mjnk}^{0}(B_{+,\frac{1}{4}\kappa_{1}}^{\infty,r_{0}})),\|\bullet\|_{X_{s}})$
	with $s=\frac{d+4}{2}$ such that \eqref{eq:net-improve-2} holds.
	Clearly, $Y$ is also a $\tilde{\delta}$-net of $((\Gamma_{mjnk}^{0}(Z)),\|\bullet\|_{X_{s}})$. 
	
	We now choose $\beta>\frac{1}{4}\kappa_{1}$ sufficiently large, depending on
	$d,\mu,\theta,\kappa$, such that 
	\[
	|Z|\ge\exp\bigg[2^{-(d+1)}\bigg(\frac{\mu\beta}{\theta}\bigg)^{\frac{d}{\alpha}}\bigg]>\exp\bigg[\eta(1+\theta^{-\frac{1}{2\alpha}})^{2d}\bigg]\ge|Y|.
	\]
	Therefore, we can choose two different $q_{1},q_{2}\in Z\subset\mathcal{N}_{\alpha\beta}^{\theta}(B_{r_{0}})\subset L^{\infty}(B_{1})$
	such that there exists $y\in Y$ such that 
	\begin{align*}
		\|\Gamma_{mjnk}(q_{1};0)-y_{mjnk}\|_{X_{s}} & \le\tilde{\delta},\\
		\|\Gamma_{mjnk}(q_{2};0)-y_{mjnk}\|_{X_{s}} & \le\tilde{\delta}.
	\end{align*}
	It follows from Proposition~\ref{prop:matrix-repn} that 
	\[
	\|\Lambda_{q_{1}}-\Lambda_{q_{2}}\|_{\frac{d+4}{2}\rightarrow-\frac{d+4}{2}}\le8\sqrt{2}\tilde{\delta}.
	\]
	Again, we estimate $\tilde{\delta}$ by $\theta$. 
	\begin{itemize}
		\item \textbf{Case 1.} If $\frac{2\kappa^{2}}{\tilde{\delta}}\le\log(1+\frac{1}{\tilde{\delta}})$,
		then \eqref{eq:equation-theta-delta-improve} yields
		\[
		\theta^{-\frac{1}{2\alpha}}\le3\log\bigg(1+\frac{1}{\tilde{\delta}}\bigg)\le3\log\bigg(\frac{2}{\tilde{\delta}}\bigg),
				\]
		namely, 
		\[
		\tilde{\delta}\le2\exp\bigg(-\frac{1}{3}\theta^{-\frac{1}{2\alpha}}\bigg).
		\]
		\item \textbf{Case 2.} If $\frac{2\kappa^{2}}{\tilde{\delta}}\ge\log(1+\frac{1}{\tilde{\delta}})$,
		then \eqref{eq:equation-theta-delta-improve} implies 
		\[
		\theta^{-\frac{1}{2\alpha}}\le\frac{3\kappa^{2}}{\tilde{\delta}},\quad\text{i.e.},\quad\tilde{\delta}\le3\kappa^{2}\theta^{\frac{1}{2\alpha}}.
		\]
	\end{itemize}
	Putting these two cases together, we obtain 
	\[
	\tilde{\delta}\le2\exp\bigg(-\frac{1}{3}\theta^{-\frac{1}{2\alpha}}\bigg)+3\kappa^{2}\theta^{\frac{1}{2\alpha}},
	\]
	which gives \eqref{eq:DN-est-improve}. By the fact that $Z$ is $\theta$-discrete
	subset with respect to the norm $\|\bullet\|_{L^{\infty}(B_{1})}$, we then obtain \eqref{eq:discrete-q1-q2}. The proof is now completed. 
\end{proof}

\section*{Acknowledgements}
Kow and Wang were partly supported by MOST 108-2115-M-002-002-MY3 and 109-2115-M-002-001-MY3. Uhlmann was partly supported by
NSF, a Walker Family Endowed Professorship at UW and a Si-Yuan Professorship at IAS, HKUST.

\end{document}